\newtheorem{theorem}{Theorem}[section]
\newtheorem{lemma}[theorem]{Lemma}
\newtheorem*{theoremNoNum}{Theorem}
\theoremstyle{definition}
\theoremstyle{remark}
\newtheorem*{remark}{Remark}
\numberwithin{equation}{section}
\begin{document}
\newcommand\mylabel[1]{\label{#1}}
\newcommand{\beqs}{\begin{equation*}}
\newcommand{\eeqs}{\end{equation*}}
\newcommand{\beq}{\begin{equation}}
\newcommand{\eeq}{\end{equation}}
\newcommand\eqn[1]{(\ref{eq:#1})}
\newcommand\thm[1]{\ref{thm:#1}}
\newcommand\lem[1]{\ref{lem:#1}}
\newcommand\propo[1]{\ref{propo:#1}}
\newcommand\corol[1]{\ref{cor:#1}}
\newcommand\sect[1]{\ref{sec:#1}}
\newcommand\subsect[1]{\ref{subsec:#1}}

\newcommand{\Z}{\mathbb Z}
\newcommand{\N}{\mathbb N}
\newcommand{\R}{\mathbb R}
\newcommand{\C}{\mathbb C}
\newcommand{\Q}{\mathbb Q}
\newcommand{\leg}[2]{\genfrac{(}{)}{}{}{#1}{#2}}
\newcommand{\bfrac}[2]{\genfrac{}{}{}{0}{#1}{#2}}
\newcommand{\sm}[4]{\left(\begin{smallmatrix}#1&#2\\ #3&#4 \end{smallmatrix} \right)}

\title[Bilateral series and Ramanujan's radial limits]
{Bilateral series and Ramanujan's radial limits}

\author{J. Bajpai}
\address{Department of Mathematical and Statistical Sciences, University of Alberta, Edmonton, Alberta T6G 2G1. Canada.}
\email{jitendra@math.ualberta.ca}

\author{S. Kimport}
\address{Department of Mathematics, Yale University, New Haven, CT. 06520}
\email{susie.kimport@yale.edu}

\author{J. Liang}
\address{Department of Mathematics, University of Florida, Gainsville, FL. 32601}
\email{jieliang@ufl.edu}
\thanks{This project is the result of participation in the 2013 Arizona Winter School.}

\author{D. Ma}
\address{Department of Mathematics, University of Arizona, Tucson, AZ. 85721}
\email{martin@math.arizona.edu}

\author{J. Ricci}
\address{Department of Mathematics, Wesleyan University, Middletown, CT. 06459}
\email{jricci@wesleyan.edu}

\date{May 12, 2013}

\begin{abstract}
     Ramanujan's last letter to Hardy explored the asymptotic properties of modular forms, as well as those of certain interesting $q$-series which he called \emph{mock theta functions}. For his mock theta function $f(q)$, he claimed that as $q$ approaches an even order $2k$ root of unity $\zeta$,
     \[\lim_{q\to \zeta} \big(f(q) - (-1)^k (1-q)(1-q^3)(1-q^5)\cdots (1-2q + 2q^4 - \cdots)\big) = O(1),\]
     and hinted at the existence of similar statements for his other mock theta functions.  Recent work of Folsom-Ono-Rhoades provides a closed formula for the implied constant in this radial limit of $f(q)$. Here, by different methods, we prove similar results for all of Ramanujan's 5th order mock theta functions. Namely, we show that each 5th order mock theta function may be related to a modular bilateral series, and exploit this connection to obtain our results.  We further explore other mock theta functions to which this method can be applied.

\end{abstract}

\maketitle

\section{Introduction}

    In his deathbed letter to Hardy in 1920, Ramanujan wrote down 17 curious $q$-series which he dubbed \emph{mock theta functions}. Due to work of Zwegers \cite{Zwegers01, Zwegers02}, Bringmann-Ono and others \cite{Zagier}, we are now able to recognize Ramanujan's mock theta functions as holomorphic parts of weight $1/2$ harmonic weak Maass forms. Although this has been a catalyst for recent developments in numerous areas of mathematics, here we will focus on Ramanujan's original formulation.

    Ramanujan analyzed $q$-hypergeometric series with asymptotics similar to those of modular theta functions near roots of unity, but which were not themselves modular.   In his letter, he asked of such series:
    
    \noindent\emph{``...The question is: - is the function taken the sum of two functions one of which is an ordinary theta function and the other a (trivial) function which is O(1) at \underline{all} the points $e^{2\pi im/n}$? ...I have constructed a number of examples in which it is inconceivable to construct a $\vartheta$-function to cut out the singularities of the original function.''}
    
    Examples of this form are what he then referred to as mock theta functions.  Though Ramanujan did not  prove his assertion,  recent work by Griffin-Ono-Rolen \cite{GOR} has confirmed that no such theta functions exist for Ramanujan's examples.  The only example Ramanujan offered details for is the function
    \[f(q) := 1+\frac{q}{(1+q)^2}+\frac{q^4}{(1+q)^2(1+q^2)^2}+\cdots\]
    He claimed that as $q$ approaches an even order $2k$ root of unity $\zeta$ radially within the unit disk, we have that
    \begin{align}\label{fbo}
        \lim_{q \to \zeta} \big( f(q) - (-1)^k b(q) \big) = O(1),
    \end{align}
   where $b(q) := (1-q)(1-q^3)(1-q^5) \cdots (1-2q+2q^4-\cdots)$ is a modular form.
   
   \begin{remark}    Here and throughout this paper, we take $q := e^{2\pi i \tau}$ for $\tau\in\mathbb{H}$.  Then, by modular form we will mean the function is modular, up to a rational power of $q$, with respect to some character.  The character for these modular forms can be explicitly calculated using \cite{Ono}, for example.
\end{remark}

    Recently, Folsom, Ono, and Rhoades \cite{FOR1, FOR2} provided a closed formula for the $O(1)$ numbers in Ramanujan's claim \eqref{fbo}:
\begin{theoremNoNum}[Theorem 1.1 in \cite{FOR1,FOR2}] If $\zeta$ is a primitive even order $2k$ root of unity, then, as $q$ approaches $\zeta$ radially within the unit disk, we have that
    \begin{equation*}
      \lim_{q \to \zeta} \big( f(q) - (-1)^k b(q) \big) = -4\sum_{n=0}^{k-1} (1+\zeta)^2(1+\zeta^2)^2\cdots (1+\zeta^n)^2\zeta^{n+1}.
    \end{equation*}
\end{theoremNoNum}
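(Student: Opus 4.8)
The plan is to exhibit $f(q)$ as one half of a \emph{modular bilateral series} and to read off the radial limit as the discrepancy between the two halves. Write $(-q;q)_n := (1+q)(1+q^2)\cdots(1+q^n)$, so that $f(q)=\sum_{n\ge 0} q^{n^2}/(-q;q)_n^2$, and introduce the bilateral series
\[
 B(q) := \sum_{n\in\Z}\frac{q^{n^2}}{(-q;q)_n^2},
\]
where the negative-index terms are interpreted through $(-q;q)_{-m}=\big[(1+q^{1-m})(1+q^{2-m})\cdots(1+q^{0})\big]^{-1}$. Using $1+q^{-j}=q^{-j}(1+q^{j})$, a short computation shows that the $n=-m$ summand collapses to $4\,q^{m}(-q;q)_{m-1}^2$, whence the negative half of $B$ is a partial theta series and
\[
 f(q) = B(q) - 4\sum_{n=0}^{\infty} q^{\,n+1}\,(-q;q)_n^2 .
\]

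First I would prove that $B(q)$ is modular, which is the foundation of the whole argument: clearing the denominator, one shows by a bilateral (Hecke-type) summation that $(q;q)_\infty B(q)$ is a theta series, so that $B$ is modular up to a rational power of $q$ in the sense of the Remark. With this in hand, the crux is to compare $B(q)$ with $(-1)^k b(q)$ as $q\to\zeta$. Both functions blow up at $\zeta$, so one cannot argue termwise; instead, since $\zeta=e^{2\pi i a/c}$ lies at a cusp, I would pass to the modular (theta) transformation carrying $a/c$ to $i\infty$ and check that the principal parts and the constant terms of $B$ and of $(-1)^k b$ at that cusp coincide, giving
\[
 \lim_{q\to\zeta}\big(B(q)-(-1)^k b(q)\big)=0 .
\]
I expect this matching to be the main obstacle: it requires both the explicit modular description of $B$ and a careful tracking of the automorphy factor, the latter being the source of the sign $(-1)^k$ (note $\zeta^{k}=-1$ at an even order $2k$ point).

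The remaining step is elementary. In the partial theta series, every factor $(-q;q)_n$ with $n\ge k$ contains $1+q^{k}$, which tends to $1+\zeta^{k}=0$; hence each term with $n\ge k$ vanishes in the radial limit, while the finitely many terms with $0\le n\le k-1$ are continuous at $\zeta$. After controlling the tail uniformly along the radius (the terms beyond $n=k$ all carry the vanishing factor $(1+q^{k})^2$), one obtains
\[
 \lim_{q\to\zeta}4\sum_{n=0}^{\infty} q^{\,n+1}(-q;q)_n^2 = 4\sum_{n=0}^{k-1} (1+\zeta)^2(1+\zeta^2)^2\cdots(1+\zeta^n)^2\,\zeta^{n+1}.
\]
Combining the three steps, $\lim_{q\to\zeta}\big(f(q)-(-1)^k b(q)\big)$ equals $\lim_{q\to\zeta}\big(B(q)-(-1)^k b(q)\big)$ minus the limit of the partial theta series, which is exactly $-4\sum_{n=0}^{k-1}(1+\zeta)^2(1+\zeta^2)^2\cdots(1+\zeta^n)^2\,\zeta^{n+1}$, as claimed.
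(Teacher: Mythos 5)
Your elementary steps are correct, and your decomposition is in fact the one underlying the Folsom--Ono--Rhoades formula: writing $U(-1;q):=\sum_{n\ge 0}q^{n+1}(-q;q)_n^2$, your computation of the negative-index terms correctly gives $f(q)=B(q)-4U(-1;q)$, and the truncation argument (every term with $n\ge k$ carries the factor $(1+q^k)^2\to(1+\zeta^k)^2=0$) correctly identifies the closed formula as $-4U(-1;\zeta)$. Note, however, that this paper does not prove the statement at all --- it is quoted from \cite{FOR1,FOR2} --- so the comparison can only be made against what the paper reports about that proof, and that report contradicts your central step.

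The fatal gap is your modularity claim for $B(q)$. The introduction states that Folsom--Ono--Rhoades ``utilized a bilateral series associated to $f(q)$ that is \emph{not} a modular form, but does have similar asymptotics to $b(q)$,'' and Section \ref{otherorders} repeats that ``the bilateral series $B(f;q)$ is the product of a modular form and a mock modular form.'' So $(q;q)_\infty B(q)$ is not a theta series; $B(q)$ is a mixed mock modular object, and the cusp-expansion comparison you propose (matching principal parts of two modular forms at $\zeta$) cannot even be set up. This is precisely the distinction the paper is built on: for the fifth-order functions the bilateral series \emph{are} modular (via the mock theta conjectures), which is why the authors' method works there and fails for $f(q)$. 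Since, given the easy truncation step, your desired identity $\lim_{q\to\zeta}\bigl(B(q)-(-1)^k b(q)\bigr)=0$ is \emph{equivalent} to the theorem being proved, your proposal reduces the theorem to its genuinely hard part and then disposes of that part with a false lemma. Closing this gap requires different machinery --- in \cite{FOR1,FOR2} it comes from identities relating the rank and crank generating functions (note $f(q)=R(-1;q)$ and $b(q)=C(-1;q)$) and their asymptotics near roots of unity --- not from modularity of the bilateral series.
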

    To obtain their closed form, they utilized a bilateral series associated to $f(q)$ that is not a modular form, but does have similar asymptotics to $b(q)$.  The goal of this paper is to show that a bilateral series naturally associated to a given mock theta function can sometimes be used to not only provide a similar closed formula, but can also be modular and play the role of $b(q)$.  When this is the case, we obtain the closed formulas by using a different method of proof than that employed in \cite{FOR1, FOR2}.

    We first consider Ramanujan's 5th order mock theta functions:

    \begin{equation*}\label{5thorders}
    \begin{split}
        f_0(q) &:= \sum_{n \geq 0} \frac{q^{n^2}}{(-q; q)_n} \\
        \psi_0(q) &:= \sum_{n \geq 0} q^{(n+1)(n+2)/2} (-q; q)_n \\
        \phi_0(q) &:= \sum_{n \geq 0} q^{n^2} (-q; q^2)_n  \\
        F_0(q) &:= \sum_{n \geq 0} \frac{q^{2n^2}}{(q; q^2)_n}\\
        \chi_0(q) &:= \sum_{n \geq 0} \frac{q^n}{(q^{n+1};q)_n}
    \end{split}
    \hspace{15mm}
    \begin{split}
        f_1(q) &:= \sum_{n \geq 0} \frac{q^{n(n+1)}}{(-q; q)_n} \\
        \psi_1(q) &:= \sum_{n \geq 0} q^{n(n+1)/2} (-q; q)_n \\
        \phi_1(q) &:= \sum_{n \geq 0} q^{(n+1)^2} (-q; q^2)_n  \\
        F_1(q) &:= \sum_{n \geq 0} \frac{q^{2n(n+1)}}{(q; q^2)_{n+1}}\\
        \chi_1(q) &:= \sum_{n \geq 0} \frac{q^n}{(q^{n+1};q)_{n+1}}
    \end{split}
    \end{equation*}
	where $(a;q)_n$ is the $q$-Pochhammer symbol defined as
    \[ (a; q)_n = \frac{(a; q)_\infty}{(aq^n; q)_\infty}, \]
    where $(a; q)_\infty := (1-a)(1-aq)(1-aq^2)\cdots$ and $n\in \Z$.  From this formulation, we see the well-known form (see \cite{Fine}, \cite{Gasper} for example) 
    \begin{align}
        (a; q)_{-n} &= \frac{(-a)^{-n}q^{n(n+1)/2}}{(a^{-1}q; q)_n}. \label{negative}
    \end{align}

    For a mock theta function $M(q):=\sum_{n \geq 0} c(n; q)$, we define its associated bilateral series by $B(M; q) := \sum_{n \in \Z} c(n; q)$. For example, the bilateral series $B(f_0;q)$ is given by
    \[ B(f_0;q) := \sum_{n \in \Z} \frac{q^{n^2}}{(-q; q)_n}. \]

      Surprisingly, the bilateral series associated to the 5th order mock theta functions are in fact modular forms (with the exception of $B(\chi_0;q)$ and $B(\chi_1;q)$ which will be addressed in Section \ref{chi0chi1}). Since a key component of Ramanujan's claim \eqref{fbo} is the fact that $b(q)$ is a modular form, these bilateral series beautifully lend themselves to similar radial limits.  Moreover, these bilateral series can be written as linear combinations of mock theta functions which then reveal the following simple closed formulas similar to Theorem 1.1 in \cite{FOR1}.

    \begin{theorem}\label{thm:cleanproof}
        Let $\zeta$ be a primitive root of unity, $k\in\N$, and suppose $q\to\zeta$ radially within the unit disk:
        \begin{itemize}
        \item[(a)] If $\zeta$ has order $2k$, then we have that
            \begin{align*}
        	   \lim_{q\to\zeta} \big( f_0(q) - B(f_0; q) \big)
                 &= -2\sum_{n=0}^{k-1} \zeta^{(n+1)(n+2)/2}(-\zeta;\zeta)_n,\\
        	   \lim_{q\to\zeta} \big( f_1(q) - B(f_1; q) \big)
                 &= -2\sum_{n=0}^{k-1} \zeta^{n(n+1)/2}(-\zeta;\zeta)_n,
            \end{align*}
            where $B(f_0;q)$ and $B(f_1;q)$ are modular forms of weight $1/2$ with level $\Gamma_1(20)$.

        \item[(b)] If $\zeta$ has order $2k-1$, then we have that
            \begin{align*}
        	   \lim_{q\to\zeta} \big( F_0(q) - B(F_0; q)\big)
                 &= 1 - \sum_{n=0}^{k-1} (-\zeta)^{n^2}(\zeta;\zeta^2)_n, \\
        	   \lim_{q\to\zeta} \big( F_1(q) - B(F_1; q)\big)
                 &= \zeta^{-1}\sum_{n=0}^{k-1} (-\zeta)^{(n+1)^2}(\zeta;\zeta^2)_n,
            \end{align*}
            where $B(F_0;q)$ and $B(F_1;q)$ are modular forms of weight $1/2$ with level $\Gamma_1(10)$.
        \end{itemize}
    \end{theorem}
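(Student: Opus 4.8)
The plan is to prove the closed formulas by turning each difference $M(q)-B(M;q)$ into an explicit \emph{terminating} Eulerian series and then reading off its value at $\zeta$. Since $B(M;q)=\sum_{n\in\Z}c(n;q)$ while $M(q)=\sum_{n\ge0}c(n;q)$, we have the exact identity, valid for $|q|<1$,
\[
M(q)-B(M;q)=-\sum_{n\ge1}c(-n;q).
\]
The first task is therefore to simplify the negative-index terms $c(-n;q)$ using the reflection formula \eqref{negative}. For $f_0$, where $c(n;q)=q^{n^2}/(-q;q)_n$, formula \eqref{negative} with $a=-q$ gives $(-q;q)_{-n}=q^{n(n-1)/2}/(-1;q)_n$, so that $c(-n;q)=q^{\,n^2-n(n-1)/2}(-1;q)_n=q^{n(n+1)/2}(-1;q)_n$. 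Using $(-1;q)_n=2(-q;q)_{n-1}$ for $n\ge1$ and reindexing $n\mapsto n+1$, this collapses to
\[
f_0(q)-B(f_0;q)=-2\sum_{n\ge0}q^{(n+1)(n+2)/2}(-q;q)_n=-2\,\psi_0(q),
\]
an identity of holomorphic functions on the disk. The same manipulation, with the analogous bookkeeping, gives $f_1(q)-B(f_1;q)=-2\,\psi_1(q)$.

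For part (b) I would apply \eqref{negative} with base $q^2$. For $F_0$, where $c(n;q)=q^{2n^2}/(q;q^2)_n$, one finds $(q;q^2)_{-n}=(-q)^{-n}q^{n(n+1)}/(q;q^2)_n$, whence $c(-n;q)=(-1)^n q^{n^2}(q;q^2)_n=(-q)^{n^2}(q;q^2)_n$ (using $(-1)^n q^{n^2}=(-q)^{n^2}$). Peeling off the $n=0$ term then yields
\[
F_0(q)-B(F_0;q)=1-\sum_{n\ge0}(-q)^{n^2}(q;q^2)_n,
\]
and an entirely parallel computation for $F_1$ (now the Pochhammer index is $n+1$, producing an overall $q^{-1}$ and exponents $(n+1)^2$) gives $F_1(q)-B(F_1;q)=q^{-1}\sum_{n\ge0}(-q)^{(n+1)^2}(q;q^2)_n$. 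These four identities reduce the theorem to computing the radial limits of the right-hand series.

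Next I would exploit the termination at roots of unity. If $\zeta$ has order $2k$ then $\zeta^k=-1$, so the factor $1+\zeta^k$ occurs in $(-\zeta;\zeta)_n$ for every $n\ge k$ and forces $(-\zeta;\zeta)_n=0$ there; thus the series defining $\psi_0,\psi_1$ at $\zeta$ reduce to the finite sums $\sum_{n=0}^{k-1}$ in the theorem. Likewise, if $\zeta$ has order $2k-1$ then $\zeta^{2k-1}=1$, so $1-\zeta^{2k-1}$ occurs in $(\zeta;\zeta^2)_n$ for all $n\ge k$ and annihilates those terms, yielding the finite sums in part (b) (including the isolated constant $1$ for $F_0$ and the factor $\zeta^{-1}$ for $F_1$). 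Matching these finite values against the four identities reproduces exactly the claimed right-hand sides.

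The one genuinely analytic point — and the step I expect to be the main obstacle — is justifying that the radial limit of the full series equals this formal truncation, i.e.\ that $\lim_{q\to\zeta}\sum_{n\ge k}(\cdots)=0$. A naive termwise bound fails, since $(-|q|;|q|)_\infty\to\infty$ as $|q|\to1$. The resolution is that the vanishing is not isolated: writing $q=t\zeta$ with $t\to1^-$, the product $(-q;q)_n$ picks up a factor $1-t^{mk}$ for every odd multiple $mk\le n$ (since $\zeta^{mk}=-1$), and these repeated near-zeros, together with the Gaussian decay $t^{(n+1)(n+2)/2}$, suppress the tail uniformly enough to send it to $0$; the $F_0,F_1$ tails are handled the same way via the near-zeros of $1-t^{(2k-1)m}$. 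Establishing this tail estimate (or invoking the corresponding termination/Abel-type lemma for such $q$-hypergeometric series) completes the argument. I note that the asserted modularity and levels of the bilateral series, while central to the paper's theme, are proved separately and are not needed for the closed-form limits themselves.
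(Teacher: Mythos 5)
Your reduction is the same one the paper uses: the identities $f_0(q)-B(f_0;q)=-2\psi_0(q)$, $f_1(q)-B(f_1;q)=-2\psi_1(q)$, $F_0(q)-B(F_0;q)=1-\phi_0(-q)$, and $F_1(q)-B(F_1;q)=q^{-1}\phi_1(-q)$ that you derive from \eqref{negative} are precisely the paper's equations \eqref{bf0psi0}--\eqref{bF0phi0} in Section \ref{bsums}, and the truncation via the vanishing of $(-\zeta;\zeta)_n$ for $n\ge k$ (order $2k$), respectively of $(\zeta;\zeta^2)_n$ for $n\ge k$ (order $2k-1$), is exactly how the paper finishes. All of this algebra in your write-up is correct.

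The gap is the step you flag but do not carry out: justifying the termwise passage to the radial limit, i.e.\ that the tail $\sum_{n\ge k}$ vanishes as $t\to1^-$. The paper disposes of this by citing Section 6 of Watson \cite{Watson}, where series such as $\sum_{n\ge0}q^{(n+1)(n+2)/2}(-q;q)_n$ are shown to converge absolutely for all $0<r\le1$ (with estimates uniform enough to license the interchange), so nothing is left open there. You instead sketch a direct tail estimate, and the mechanism you describe is not sufficient as stated. Concretely, write $q=t\zeta$ and $\epsilon=1-t$: there are only about $n/(2k)$ near-zero factors, each bounded by $1-t^{jk}\le jk\epsilon$, while bounding every other factor trivially by $|1+(t\zeta)^j|\le2$ gives a term bound of order $2^{n(1-1/(2k))}(n\epsilon/e)^{n/(2k)}e^{-\epsilon n^2/2}$; maximizing over $\epsilon$ (worst case $\epsilon\approx1/(kn)$) leaves $\bigl[2\,(2e^2k)^{-1/(2k)}\bigr]^n$, which grows geometrically once $k\ge3$. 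So ``repeated near-zeros plus Gaussian decay'' cannot beat the $2^n$ coming from the harmless factors. To repair your route one needs the finer fact that over each full period of $j$ the product of the non-vanishing $|1+\zeta^j|$ equals $2k$, so those factors contribute only about $(2k)^{n/(2k)}$ rather than $2^n$; then the optimized bound becomes of order $(2e^{-2})^{n/(2k)}$ and the tail is controlled. In short, your proof is incomplete at its only analytic step, and the sketch offered for that step would fail for $k\ge3$; it can be fixed by the refined estimate above, or simply by invoking Watson's convergence results as the paper does. (Separately, the weight and level assertions for $B(f_0;q)$, $B(f_1;q)$, $B(F_0;q)$, $B(F_1;q)$ are part of the theorem; you correctly note they are logically independent of the closed formulas, but they are genuine content --- the paper's Lemma \ref{5thOrderModularity}, proved via the mock theta conjectures --- and a blind proof cannot simply declare them proved elsewhere.)
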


    \begin{theorem}\label{thm:messyproof}
       Let $\zeta$ be a primitive root of unity, $k\in\N$, and suppose $q \to \zeta$ radially within the unit disk:
        \begin{itemize}
          \item[(a)] If $\zeta$ has order $2k-1$, then we have that
             \begin{align*}
        	   \lim_{q\to\zeta} \big( \psi_0(q) - B(\psi_0;q)\big)
                 &= -\frac{1}{2} -\sum_{n=1}^{2k-1} \frac{\zeta^{n^2}}{(-\zeta;\zeta)_n}, \\
        	   \lim_{q\to\zeta} \big( \psi_1(q) - B(\psi_1;q)\big)
                 &= -\frac{1}{2} -\sum_{n=1}^{2k-1} \frac{\zeta^{n(n+1)}}{(-\zeta;\zeta)_n},
            \end{align*}
            where $B(\psi_0;q)$ and $B(\psi_1;q)$ are modular forms of weight $1/2$ with level $\Gamma_1(20)$.

        \item[(b)] If $\zeta$ has order $m$, then we have that
             \begin{align*}
               \lim_{q\to\zeta} \big( \phi_0(q) - B(\phi_0;q)\big)
                 &= \begin{cases}
                    \displaystyle -2\sum_{n=1}^{2k-1} \frac{\zeta^{2n^2}}{(-\zeta;\zeta^2)_n} & \text{if } m = 2k-1, \\
                    \displaystyle -2\sum_{n=1}^{2k} \frac{\zeta^{2n^2}}{(-\zeta;\zeta^2)_n} &  \text{if } m = 4k,
                 \end{cases} \\
        	   \lim_{q\to\zeta} \big( \phi_1(q) - B(\phi_1;q)\big)
                 &= \begin{cases}
                    \displaystyle -2\zeta\sum_{n=1}^{2k-1} \frac{\zeta^{2n(n-1)}}{(-\zeta;\zeta^2)_n} &  \text{if } m = 2k-1, \\
                    \displaystyle -2\zeta\sum_{n=1}^{2k} \frac{\zeta^{2n(n-1)}}{(-\zeta;\zeta^2)_n} &  \text{if } m = 4k,
                 \end{cases}
            \end{align*}
            where $B(\phi_0;q)$ and $B(\phi_1;q)$ are modular forms of weight $1/2$ with level $\Gamma_1(10)$.
        \end{itemize}
    \end{theorem}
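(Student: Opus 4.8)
The plan is to reduce each radial limit to the radial limit of an ordinary Eulerian series and then to evaluate that limit by a grouping argument that exploits a quasi-periodicity of the $q$-Pochhammer symbols at the root of unity. Since $M(q)=\sum_{n\ge0}c(n;q)$ and $B(M;q)=\sum_{n\in\Z}c(n;q)$, the difference is simply the negative tail,
\[ M(q)-B(M;q) = -\sum_{n\ge1}c(-n;q), \]
and I would use the inversion formula \eqref{negative} to rewrite each $c(-n;q)$ explicitly. For $\psi_0$ one finds $c(-n;q)=q^{(n-1)^2}/(-1;q)_n$, and since $(-1;q)_n=2(-q;q)_{n-1}$ this collapses, after reindexing, to $\psi_0(q)-B(\psi_0;q)=-\tfrac12 f_0(q)$; the same manipulation gives $\psi_1(q)-B(\psi_1;q)=-\tfrac12 f_1(q)$ and, for the fifth-order $\phi$'s, expressions of the shape $\phi_0(q)-B(\phi_0;q)=1-\sum_{n\ge0}q^{2n^2}/(-q;q^2)_n$ and similarly for $\phi_1$. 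I would note in passing that the same computation yields $B(\psi_0;q)=\tfrac12 B(f_0;q)$, so the asserted modularity of $B(\psi_0;q)$ and $B(\psi_1;q)$ is immediate from that of $B(f_0;q),B(f_1;q)$ in Theorem \ref{thm:cleanproof}, and the $\phi$-cases are handled by the analogous bilateral identities. Thus the entire problem becomes the evaluation of $\lim_{q\to\zeta}G(q)$ for an explicit Eulerian series $G$.

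The point that distinguishes these ``messy'' cases from Theorem \ref{thm:cleanproof} is that at the relevant roots of unity the series $G$ does \emph{not} terminate: the denominators never vanish, the summands stay bounded, and $G(\zeta)$ diverges as a series. I would instead write $q=\zeta e^{-t}$ with $t\to0^+$ and group the index as $n=mN+j$ with $0\le j<N$, where $N$ is the order of $\zeta$ (for the $\phi$'s, the order of $\zeta^2$, which is $2k-1$ when $m=2k-1$ and $2k$ when $m=4k$ — precisely the source of the two cases in part~(b)). The structural fact driving everything is that the Pochhammer symbol is quasi-periodic rather than periodic: from $\prod_{l=1}^{N}(1+\zeta^{l})=2$ (the value of $x^N-1$ at $x=-1$, for $N$ odd) one gets the doubling relation $(-\zeta;\zeta)_{n+N}=2(-\zeta;\zeta)_n$, and the same window-product equals $2$ in the base-$q^2$ computations. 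Hence $(-\zeta;\zeta)_{mN+j}=2^{m}(-\zeta;\zeta)_j$, the grouped series is dominated by the geometric factor $2^{-m}$, and formally
\[ \lim_{q\to\zeta}G(q)=\Big(\sum_{m\ge0}2^{-m}\Big)\sum_{j=0}^{N-1}\frac{\zeta^{j^2}}{(-\zeta;\zeta)_j}=2\sum_{j=0}^{N-1}\frac{\zeta^{j^2}}{(-\zeta;\zeta)_j}. \]
A short rearrangement of this one-period sum, using that the $j=N$ term equals $\tfrac12$, rewrites it as $1+2\sum_{n=1}^{2k-1}\zeta^{n^2}/(-\zeta;\zeta)_n$; multiplying by $-\tfrac12$ then recovers the value claimed for $\psi_0$, and $\psi_1$ is identical. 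For $\phi_0,\phi_1$ the identical bookkeeping (now with period $N'\in\{2k-1,2k\}$ and the same ratio $2$) turns $1-G$ into $-2\sum_{n=1}^{N'}\zeta^{2n^2}/(-\zeta;\zeta^2)_n$, matching both cases of part~(b).

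The genuine obstacle is making the interchange of $\lim_{t\to0}$ and $\sum_n$ rigorous, because for $q=\zeta e^{-t}$ the true denominator is $\prod_{l=1}^n(1+\zeta^l e^{-lt})$, which agrees with $(-\zeta;\zeta)_n$ only for $n$ small relative to $1/t$. I would control this by splitting the sum at a threshold $n\asymp t^{-1/2}$: on the lower range one replaces $e^{-lt}$ by $1$ with an error tending to $0$, using the Gaussian factor $e^{-n^2t}$ together with the $2^{-m}$ decay of the denominator to bound the contribution uniformly; on the upper range the super-exponential decay of $e^{-n^2t}$ (where now $n^2t\to\infty$) against the at-most-geometric growth of the remaining factors forces the tail to $0$. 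Assembling these estimates justifies the displayed geometric evaluation and completes the proof. The heaviest part of the technical work is in the $\phi$-cases, where one must additionally track the parity of the quadratic exponent and the residue $m\bmod 4$ to identify the correct period $N'$ and verify that the window-product is again exactly $2$.
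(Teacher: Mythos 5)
Your algebraic skeleton is correct and is in fact the same as the paper's: the inversion \eqref{negative} gives $\psi_0(q)-B(\psi_0;q)=-\tfrac12 f_0(q)$, $\psi_1(q)-B(\psi_1;q)=-\tfrac12 f_1(q)$, and $\phi_0(q)-B(\phi_0;q)=-\sum_{n\ge1}q^{2n^2}/(-q;q^2)_n$ (similarly for $\phi_1$); the window products equal $2$ (over all residues when $m=2k-1$, over the odd residues when $m=4k$, which is exactly why the period is $2k-1$ resp.\ $2k$); and the block decomposition with $\sum_{m\ge 0}2^{-m}=2$, together with your observation that the full-period term equals $\tfrac12$, reproduces the stated closed forms. (For the modularity you should cite Lemma \ref{5thOrderModularity} rather than Theorem \ref{thm:cleanproof}, whose modularity assertion is that lemma, but this is cosmetic.)

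The genuine gap is in the step you yourself single out as the main obstacle: your upper range. The only $t$-uniform, purely geometric control available on the perturbed factors is $|1+\zeta^{l}e^{-lt}|\ge c_0$ for some constant $0<c_0<1$, and $c_0<1$ is unavoidable: whenever some power $\zeta^{l}$ has negative real part $-x$ (always the case for order $\ge 3$), the factor dips down to $\sqrt{1-x^2}<1$ as $e^{-lt}$ varies in $(0,1]$. Writing $a_0=\log(1/c_0)>0$, your upper-range bound on a term is $\exp(-n^2t+a_0n)$, which is maximized at $n\approx a_0/(2t)$ --- a point lying inside your range $n>t^{-1/2}$ --- where it equals $\exp\big(a_0^2/(4t)\big)\to\infty$. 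So ``Gaussian decay beats geometric growth'' is false throughout the intermediate regime $t^{-1/2}<n\ll 1/t$: summed over your upper range, the bound diverges rather than tends to $0$. Moving the threshold up to $\asymp 1/t$ does not repair this, because then your lower-range freezing error, of size $e^{Cn^2t}-1$ per term, is no longer small. The tail really is negligible, but to prove it you must transfer the quasi-periodic growth of $(-\zeta;\zeta)_n$ to the perturbed product; the paper does this with the elementary inequality $|1+su|\ge s|1+u|$ for $0<s\le1$, $|u|=1$ (phrased there as: $|1/r+u|$ is minimized at $r=1$ on $0<r\le1$), which yields
\[
\prod_{l=1}^{n}\big|1+\zeta^{l}e^{-lt}\big|\;\ge\; e^{-n(n+1)t/2}\,\big|(-\zeta;\zeta)_n\big|\;\ge\; c\,e^{-n(n+1)t/2}\,2^{n/N},
\]
so that every term of $f_0(\zeta e^{-t})$ is at most $C2^{-n/N}$ uniformly in $t$. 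With this single $t$-uniform dominating series, no splitting is needed at all: dominated convergence (Tannery's theorem) justifies interchanging the limit with the sum in one stroke, and the rest of your computation goes through verbatim.
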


    \noindent \emph{Four remarks.}
        \begin{itemize}
          \item[(a)] These theorems cover all of the roots of unity where the mock theta functions $f_0$, $f_1$, $F_0$, $F_1$, $\psi_0$, $\psi_1$, $\phi_0$, and $\phi_1$ have singularities.
          \item[(b)] The hypotheses on the roots of unity in Theorem \ref{thm:messyproof} ensure that the denominators in the closed formulas do not cause singularities.
          \item[(c)] The modular forms $B(M; q)$ are given explicitly in Section \ref{modularityProof}.
          \item[(d)] These results are particularly elegant because only one modular form is needed to cut out all of the singularities.  This differs from Ramanujan's claim \eqref{fbo} where the modular form changes by a factor of $(-1)^k$ depending on which even order $2k$ root of unity is being considered.
        \end{itemize}

    These theorems, coupled with the fact that the bilateral series are modular forms, provide insight into why Ramanujan might have been so fascinated with these functions.  The above formulas embody the  property that, although mock theta functions are not themselves modular, they do have similar asymptotic properties to modular forms.   Additionally, the closed formulas for each of the radial limits suggest further connections between mock theta functions and quantum modular forms (see \cite{FOR1}).

    In Section \ref{modularity} we prove the modularity of the bilateral series in Theorems \ref{thm:cleanproof} and \ref{thm:messyproof}. In Section \ref{proofs} we provide the proofs of these theorems.  In order to obtain results for $\chi_0$ and $\chi_1$, we use a different type of bilateral series, which is covered in Section \ref{chi0chi1}.  Section \ref{otherorders} develops similar results for other mock theta functions where this method using bilateral series can be applied.

\section*{Acknowledgement}

    The authors would like to thank Amanda Folsom, Ken Ono, and Robert Rhoades for their advice and guidance. The authors would also like to thank the organizers of the 2013 Arizona Winter School for providing the environment in which this project was developed.

\section{Modularity of the bilateral series}\label{modularity}
    As mentioned in the introduction, a key component in Theorems \ref{thm:cleanproof} and \ref{thm:messyproof} is that the bilateral series are in fact modular forms.  This section is devoted to showing the modularity of these series, as is summarized in the following lemma.

    \begin{lemma}\label{5thOrderModularity}
    The bilateral series $B(M;q)$ is a modular form of weight $1/2$ with level $\Gamma_1(20)$ when $M \in \{f_0, f_1, \psi_0, \psi_1\}$ and level $\Gamma_1(10)$ when $M\in\{\phi_0, \phi_1, F_0, F_1\}$.
    \end{lemma}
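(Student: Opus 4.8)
The plan is to reduce each bilateral series to a classical theta function, and then to appeal to the standard modularity of such theta functions. The first step is purely formal: for a given $M$, I would split $B(M;q)=\sum_{n\ge 0}c(n;q)+\sum_{n\le -1}c(n;q)$, substitute $n\mapsto -n$ in the second sum, and apply the reflection identity \eqref{negative} to rewrite every negative-index $q$-Pochhammer symbol as a positive-index one. After simplifying the resulting powers of $q$, the tail $\sum_{n\le -1}c(n;q)$ collapses to a unilateral series that is itself a scalar multiple of another (or the same) fifth order mock theta function, up to an additive constant. For instance, \eqref{negative} gives $(-q;q)_{-m}=q^{m(m-1)/2}/(-1;q)_m$ together with $(-1;q)_m=2(-q;q)_{m-1}$, so that
\[ B(f_0;q)=f_0(q)+2\psi_0(q), \]
and an entirely analogous computation expresses each of the eight bilateral series as an explicit $\C$-linear combination of fifth order mock theta functions (some evaluated at $-q$) plus a constant.

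The crucial second step is to recognize each of these combinations as a theta function. This is exactly where the non-modular, or \emph{mock}, parts of the individual summands must cancel: although neither $f_0$ nor $\psi_0$ is modular, the specific combination $f_0+2\psi_0$ produced by the bilateral completion is. I would establish this by invoking the classical theta identities for the fifth order functions arising from Ramanujan's lost notebook (see, e.g., \cite{Fine} and the references therein), which express precisely the combinations arising in the first step as generalized eta-quotients, that is, as theta series of the form $\sum_{n}\chi(n)q^{an^2+bn}$. Carrying this out for all eight functions produces the explicit modular representatives recorded in Section \ref{modularityProof}.

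Finally, once $B(M;q)$ has been written (up to a rational power of $q$) as such a theta series, its weight-$1/2$ modularity follows from the standard criterion for half-integral weight theta functions, as in \cite{Ono}. The level is then read off from the shape of the quadratic exponent and the denominators of the $q$-powers appearing: the modulus-$5$ structure inherited from the fifth order functions, together with the parity of the exponents, yields level $\Gamma_1(20)$ for $M\in\{f_0,f_1,\psi_0,\psi_1\}$ and level $\Gamma_1(10)$ for $M\in\{\phi_0,\phi_1,F_0,F_1\}$. I expect the main obstacle to lie entirely in the second step: locating and verifying the correct theta identity for each combination, since these are the deep inputs that force the mock contributions to cancel. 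The first and third steps are, respectively, routine $q$-series manipulation and an appeal to standard modularity.
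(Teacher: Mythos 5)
Your outline has the same skeleton as the paper's proof: your first step (the reflection identity giving $B(f_0;q)=f_0(q)+2\psi_0(q)$ and its analogues) is exactly the paper's Section \ref{bsums}, and the overall plan---invoke known identities to force the mock parts to cancel, then cite standard modularity theory to get weight and level---is also the paper's. However, two points in your second and third steps are genuine gaps as written. First, the identities you need are the mock theta conjectures: for $f_0,f_1,\psi_0,\psi_1$ they are Watson's identities \cite{Watson} (reprinted as $(2.13)_R$--$(2.16)_R$ in \cite{AG}; the paper takes the explicit combination $4C_4-2C_2+C_3+2C_1$ of four of them), and for $F_0,F_1,\phi_0,\phi_1$ they are the Section 3 conjectures of \cite{AG}, which are a deep theorem of Hickerson \cite{Hickerson} from 1988, not classical results; none of this is in \cite{Fine}, so the crucial input of your proof is left pointing at a reference that does not contain it.

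Second, and more seriously, these identities do not express the combinations as theta series $\sum_n\chi(n)q^{an^2+bn}$. What they give is, for example,
\[
B(f_0;q)=\vartheta_4(0;q)\,G(q)+4qK(q^2)H(q^4),
\]
a sum of products of eta quotients with the Rogers--Ramanujan functions $G$ and $H$, which carry $(q;q^5)_\infty$-type products in their \emph{denominators}. Indeed $B(f_0;q)=1+3q-q^2+\cdots$ cannot equal any single unary theta series, since for a character $\chi$ each coefficient of $\sum_n\chi(n)q^{an^2+bn}$ is a sum of at most two character values and hence is bounded by $2$ in absolute value. Consequently the ``standard criterion for half-integral weight theta functions'' is not the right tool for the last step; what is needed is the modularity of $G$ and $H$ themselves as weight $0$ forms on $\Gamma_1(5)$. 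This is precisely what the paper supplies in Lemma \ref{RRquotients}, by expressing $G$ and $H$ through Klein forms and using the Klein form transformation law \cite{Kubert}. Your parenthetical description ``generalized eta-quotients'' is in fact the correct one---it is the identification of these with theta series that fails---so if you replace the theta-series criterion with the transformation theory of Klein forms (equivalently, of generalized eta quotients), your argument closes and coincides with the paper's.
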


    To prove this, we first rewrite our bilateral series as linear combinations of mock theta functions.  This then allows us to utilize the mock theta conjectures, which relate these linear combinations to specific modular forms.  Using the work in Section \ref{rogersRamanujan}, we are then able to determine weight and level.

    \subsection{Alternative forms of bilateral series}\label{bsums}
    By using \eqref{negative}, we can express the bilateral series associated to each of the 5th order mock theta functions in Lemma \ref{5thOrderModularity} as follows:

        \begin{align}
            B(f_0; q) &= f_0(q)+2\psi_0(q) = 2B(\psi_0;q)\label{bf0psi0}, \\
            B(f_1; q) &= f_1(q)+2\psi_1(q) = 2B(\psi_1;q)\notag, \\
            B(F_0; q) &= F_0(q) + \phi_0(-q) -1 =  B(\phi_0; -q), \label{bF0phi0}\\
            B(F_1; q) &= F_1(q) - q^{-1}\phi_1(-q)  =  -q^{-1}B(\phi_1; -q) \notag.
        \end{align}

    \subsection{Reformulations of the Rogers-Ramanujan functions}\label{rogersRamanujan}
        In our proof of Lemma \ref{5thOrderModularity} we will make use of the Rogers-Ramanujan functions,

        \begin{align*}
            G(q) := \sum_{n \geq 0} \frac{q^{n^2}}{(q; q)_n} = \frac{1}{(q; q^5)_\infty(q^4; q^5)_\infty} \text{\quad and \quad} H(q) := \sum_{n \geq 0} \frac{q^{n^2+n}}{(q; q)_n} = \frac{1}{(q^2; q^5)_\infty(q^3; q^5)_\infty}.
        \end{align*}

        We provide a reformulation of $G(q)$ and $H(q)$ in terms of the Dedekind $\eta$ function  $\eta(\tau) := q^{1/24}(q;q)_\infty$ and Klein forms as defined below. This will be helpful in determining the weight and level of the bilateral series described in the lemma. Though these functions have been studied in great detail, we were unable to find these reformulations in the literature and therefore we include them for completeness.

        The Klein form $\mathfrak{t}^{(N)}_{(r,s)}=\mathfrak{t}_{(r,s)}$ for $N\in \mathbb{N}$ and $(r,s)\in\Z^2$ such that $(r,s)\not\equiv (0,0)\!\! \mod{N\times N}$ is a function on $\mathbb{H}$ defined as
 \begin{align}\mathfrak{t}_{(r,s)}(\tau) := -\frac{\zeta_{2N^2}^{s(r-N)}}{2\pi i}q^{\frac{r(r-N)}{2N^2}} (1- \zeta_N^s q^{\frac{r}{N}}) \prod_{n=1}^\infty \frac{(1-\zeta_N^s q^{n+\frac{r}{N}})(1-\zeta_N^{-s} q^{n-\frac{r}{N}})}{(1-q^n)^2},\label{klein}\end{align}
 where $\zeta_n := e^{2\pi i /n}$.
       
        The Klein form has the following transformation law under $\gamma=\sm{a}{b}{c}{d}\in\textnormal{SL}_2(\Z)$:
        \[\mathfrak{t}_{(r,s)} (\gamma \tau) = (c\tau + d)^{-1}\mathfrak{t}_{(r,s)\gamma}(\tau)\]
        where $(r,s)\gamma = (ra + sc, rb + sd)$. For more details on Klein forms see \cite{Kubert}.

        \begin{lemma}\label{RRquotients}
            Assuming the notation above,
            \[G(q) =  -\frac{\zeta_{5}^{3}}{2\pi i} \frac{q^{\frac{1}{60}}}{\eta^{2}(5\tau) \mathfrak{t}_{(1,5)}(5\tau)} \text{\quad and \quad} H(q) =-\frac{\zeta_{10}^{7}}{2\pi i} \frac{q^{-\frac{11}{60}}}{\eta^{2}(5\tau) \mathfrak{t}_{(2,5)}(5\tau)}.\]

        \end{lemma}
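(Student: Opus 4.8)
The plan is to start from the Rogers–Ramanujan product identities already recorded above, namely $G(q) = \big((q;q^5)_\infty(q^4;q^5)_\infty\big)^{-1}$ and $H(q) = \big((q^2;q^5)_\infty(q^3;q^5)_\infty\big)^{-1}$, and to show that the claimed $\eta$–Klein-form quotients reproduce exactly these products. Since both sides are explicit infinite products times a rational power of $q$, the identity is really a matter of unwinding the definition \eqref{klein} and matching factors; no deeper modularity input is needed at this stage (that is deferred to Lemma \ref{5thOrderModularity}).

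Concretely, I would first substitute $N=5$ and $\tau\mapsto 5\tau$ (so that $q\mapsto q^5$) directly into \eqref{klein}. The crucial simplification is that $\zeta_5^{\,s}=\zeta_5^{\,5}=1$ for both relevant choices $s=5$, so every factor $1-\zeta_5^{\pm s}q^{5(n\pm r/5)}$ collapses to the clean binomial $1-q^{5n\pm r}$. For $(r,s)=(1,5)$ this yields
\[
\mathfrak{t}_{(1,5)}(5\tau) = -\frac{\zeta_5^3}{2\pi i}\, q^{-2/5}\,(1-q)\prod_{n\ge 1}\frac{(1-q^{5n+1})(1-q^{5n-1})}{(1-q^{5n})^2},
\]
and analogously for $(r,s)=(2,5)$, with $1-q$ replaced by $1-q^2$ and the leading power $q^{-2/5}$ replaced by $q^{-3/5}$.

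Next I would reassemble the numerator products into Pochhammer symbols: the factor $1-q$ together with $\prod_{n\ge1}(1-q^{5n\pm1})$ telescopes into $(q;q^5)_\infty(q^4;q^5)_\infty$, while the denominator gives $(q^5;q^5)_\infty^2$; the $(2,5)$ case produces $(q^2;q^5)_\infty(q^3;q^5)_\infty$ in the numerator in the same way. Multiplying through by $\eta^2(5\tau)=q^{5/12}(q^5;q^5)_\infty^2$ cancels the denominator product, so that $\eta^2(5\tau)\mathfrak{t}_{(r,s)}(5\tau)$ becomes a constant times a single power of $q$ times the appropriate Rogers–Ramanujan product. Collecting the power of $q$ gives $5/12-2/5=1/60$ in the first case and $5/12-3/5=-11/60$ in the second, which are exactly the $q^{1/60}$ and $q^{-11/60}$ appearing in the statement; dividing and inverting then recovers $G(q)$ and $H(q)$.

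The only genuinely delicate part is the bookkeeping. One must verify that the root-of-unity prefactor $-\zeta_{2N^2}^{s(r-N)}/(2\pi i)$ in \eqref{klein}, evaluated at $(N,r,s)=(5,1,5)$ and $(5,2,5)$, simplifies to $-\zeta_5^3/(2\pi i)$ and $-\zeta_{10}^7/(2\pi i)$ respectively, i.e. that $\zeta_{50}^{-20}=\zeta_5^3$ and $\zeta_{50}^{-15}=\zeta_{10}^7$, and that these constants cancel cleanly when forming the quotient on the right-hand side. Once the fractional $q$-powers and these phase factors are checked, the two identities follow by direct comparison with the product forms of $G$ and $H$.
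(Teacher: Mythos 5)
Your proposal is correct and follows essentially the same route as the paper: both unwind the definition \eqref{klein} at $(N,r,s)=(5,1,5)$ and $(5,2,5)$, pass to $\tau\mapsto 5\tau$, reassemble the resulting products into $(q;q^5)_\infty(q^4;q^5)_\infty$ and $(q^2;q^5)_\infty(q^3;q^5)_\infty$, and invert using the Rogers--Ramanujan product identities; your bookkeeping ($\zeta_{50}^{-20}=\zeta_5^3$, $\zeta_{50}^{-15}=\zeta_{10}^7$, and the exponents $5/12-2/5=1/60$, $5/12-3/5=-11/60$) all checks out. The only cosmetic difference is that the paper first evaluates $\mathfrak{t}_{(r,5)}(\tau)$ and then substitutes $\tau\mapsto 5\tau$, whereas you substitute directly.
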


\begin{proof}
By taking $r=1, s=5$ and $N=5$ in (\ref{klein}), we get that 
\begin{align*}
\mathfrak{t}_{(1,5)}(\tau) &= -\frac{\zeta_{5}^{-2}}{2\pi i}q^{-\frac{2}{25}} (1- q^{\frac{1}{5}}) \prod_{n=1}^\infty \frac{(1-q^{n+\frac{1}{5}})(1- q^{n-\frac{1}{5}})}{(1-q^n)^2}.
\end{align*}
Then, letting $\tau\mapsto 5\tau$, we see
\[\mathfrak{t}_{(1,5)}(5\tau) = -\frac{\zeta_{5}^{3}q^{\frac{1}{60}}}{2\pi i} \frac{(q;q^5)_{\infty} (q^4;q^5)_{\infty}}{\eta^{2}(5\tau)}.\]

This implies that
\begin{align}
	-\zeta_{5}^{-3}{(2\pi i)}q^{-\frac{1}{60}}\mathfrak{t}_{(1,5)}(5\tau) \eta^{2}(5\tau) = (q;q^5)_{\infty} (q^4;q^5)_{\infty}.\label{kl}
\end{align} 
Now, by using the definition of $G(q)$ and equation (\ref{kl}), we immediately obtain the desired form for $G(q)$. Similarly, by taking $r=2$, $s=5$, $N=5$ and letting $\tau\mapsto 5\tau$, we get the result for $H(q)$.
\end{proof}
Note that $\mathfrak{t}_{(r,N)}^{(N)}(N\tau)$ is a modular form of weight $-1$ and level $\Gamma_1(N)$.  Therefore from Lemma \ref{RRquotients}, we immediately see that the Rogers-Ramanujan functions $G$ and $H$ are modular forms of weight $0$ and level $\Gamma_{1}(5)$.

\subsection{Proof of Lemma \ref{5thOrderModularity}}\label{modularityProof}
    We start by proving the modularity of $B(f_0;q)$.  For this, we use four of the identities established by Watson \cite{Watson} (reprinted as $(2.13)_R-(2.16)_R$ in \cite{AG}),
    \begin{equation} \label{Ci}
        \begin{split}
            C_1(q) &:= f_0(q) + 2F_0(q^2) - 2 - \vartheta_4(0;-q)G(-q) = 0,  \\
    	    C_2(q) &:= \phi_0(-q^2) + \psi_0(q) - \vartheta_4(0;-q)G(-q) = 0,  \\
    	    C_3(q) &:= 2\phi_0(-q^2) - f_0(q) - \vartheta_4(0;q)G(q) = 0, \\
    	    C_4(q) &:= \psi_0(q) - F_0(q^2) + 1 - qK(q^2)H(q^4) = 0,
        \end{split}    	
	\end{equation}
	where
	\begin{align*}
		\vartheta_4(0; q) := \sum_{n \in \Z} (-1)^nq^{n^2} = \frac{\eta(\tau)^2}{\eta(2\tau)}\text{\quad and\quad}
		K(q) := \sum_{n \geq 0} q^{n(n+1)/2} = \frac{\eta(2\tau)^2}{q^{1/8}\eta(\tau)}.
	\end{align*}
	Note that to obtain $C_1(q)$ and $C_2(q)$ in \eqref{Ci}, we let $q\mapsto -q$ in $(2.13)_R$ and $(2.14)_R$, respectively, from \cite{AG}.  In this form, it is not hard to see that
	\[4C_4(q) - 2C_2(q) + C_3(q) +2C_1(q) = f_0(q) + 2\psi_0(q) - \vartheta_4(0;q)G(q) + 4qK(q^2)H(q^4) = 0,\]
	that is,
	\[B(f_0;q) = f_0(q) + 2\psi_0(q) = \vartheta_4(0;q)G(q) + 4qK(q^2)H(q^4).\]
	We now make use of the well-known fact that, for $N\in\N$, $\eta(N\tau)$ is a weight $1/2$ modular form on $\Gamma_0(N) \supseteq \Gamma_1(N)$ (see, for example, \cite{Ono}).  Then, using Lemma \ref{RRquotients} and the transformation of the Klein forms, we see that $B(f_0;q)$ is a weight $1/2$ modular form on $\Gamma_1(20)$, as desired.  Since $B(f_0;q) = 2B(\psi_0;q)$, we have shown that $B(\psi_0;q)$ is a modular form as well.
	
	The modularity of the remaining bilateral series is obtained similarly: using the mock theta conjectures in Section 2 of \cite{AG}, also proved by Watson, we can show that
	\[B(f_1;q) = 2B(\psi_1;q) = -\vartheta_4(0;q)H(q) + 4K(q^2)G(q^4).\]
	Further, using the mock theta conjectures in Section 3 of \cite{AG}, proved by Hickerson in \cite{Hickerson}, we can show that
          \begin{align*}
            B(F_0; q) =  F_0(q) + \phi_0(-q)-1 &= \frac{(q^5; q^5)_\infty G(q^2)H(q)}{H(q^2)} - qK(q^5)H(q^2), \\
            B(F_1; q) = F_1(q) -q^{-1}\phi_1(-q) &= 3K(q^5)G(q^2) - \frac{H(q)^2 (q^5;q^5)_\infty}{G(q)}.
        \end{align*}
        Then, since $B(F_0;q) = B(\phi_0; -q)$ and $B(F_1;q) = -q^{-1}B(\phi_1;-q) $,  we have shown the modularity of each of these bilateral series.\hfill \ensuremath{\Box}

\section{Proof of theorems \ref{thm:cleanproof} and \ref{thm:messyproof}}\label{proofs}

\subsection{Proof of Theorem \ref{thm:cleanproof}}
    \begin{proof}
        We start by considering $f_0$.  Let $\zeta$ be a primitive root of unity of order $2k$. From \eqref{bf0psi0}, we have that
            \begin{align*}
                \lim_{q\to\zeta} \big(f_0(q) - B(f_0; q)\big) &= -2\lim_{q\to\zeta} \sum_{n\ge 0} q^{(n+1)(n+2)/2}(-q;q)_n\\
        		  & = -2\lim_{r\to 1^-} \sum_{n\ge 0} (r\zeta)^{(n+1)(n+2)/2}(-r\zeta;r\zeta)_n \\
                	&= -2\sum_{n=0}^{k-1} \zeta^{(n+1)(n+2)/2}(-\zeta;\zeta)_n.
            \end{align*}
            We can interchange the limit and the summation since, as shown in Section 6 of \cite{Watson},  the above infinite sum is absolutely convergent for all $0 < r \le 1$.  Then, the last equality follows from the fact that, since $\zeta$ has order $2k$,
            \[(-\zeta;\zeta)_n = (1+\zeta) (1+\zeta^2)\cdots (1+\zeta^{k-1}) (1+\zeta^k) \cdots (1+\zeta^n) = 0\]
        for all $n\ge k$. The modularity of $B(f_0;q)$ is shown in Lemma \ref{5thOrderModularity}.  This proves the result for $f_0$.
        The remaining statements of the theorem are proved in a similar fashion.
    \end{proof}

\subsection{Proof of Theorem \ref{thm:messyproof}}

    \begin{proof}
        We start by considering $\psi_0$. Let $\zeta$ be a primitive root of unity of order $2k-1$. From \eqref{bf0psi0}, we have that
        \begin{equation}
            \lim_{q\to\zeta} \big( \psi_0(q) - B(\psi_0;q) \big)= -\frac{1}{2}\lim_{q\to\zeta}\left( \sum_{n=0}^{\infty} \frac{q^{n^2}}{(-q;q)_n} \right) = -\frac{1}{2}\lim_{r\to1^{-}}\left( \sum_{n=0}^{\infty} \frac{(r\zeta)^{n^2}}{(-r\zeta;r\zeta)_n} \right).\label{radialpsi0}
        \end{equation}
        We will write this limit as a finite sum.  Due to the shape of the series, the proof in this case is more subtle.  Therefore, we provide more details.

        For any complex number $u=\cos\theta+i\sin\theta$ of modulus 1, let us consider $|\frac{1}{r}+u|$ as a function of $r$. For $0< r\leq 1$, the function $|\frac{1}{r}+u|$ has a minimum at $r=1$ since
        $$\frac{d}{dr}\left|\frac{1}{r}+u\right|=\frac{-(1+r\cos\theta)}{r^3\sqrt{\frac{1}{r^2}+\frac{2}{r}\cos\theta+1}}\leq 0.$$
        Using this, we can construct a dominating series, which shows that the rightmost sum in \eqref{radialpsi0} is absolutely convergent for all $0<r \leq 1$. Therefore, we can interchange the limit with the infinite sum, yielding\\
    	    \begin{align*}
                    \lim_{q\to\zeta} \big( \psi_0(q) - B(\psi_0;q) \big)
                        &= -\frac{1}{2}\lim_{r\to1^-} \sum_{n=0}^{\infty} \frac{(r\zeta)^{n^2}}{(-r\zeta;r\zeta)_n} \\
    		   		    &= -\frac{1}{2} \sum_{n=0}^{\infty} \frac{\zeta^{n^2}}{(-\zeta;\zeta)_n}\\
                        &= -\frac{1}{2}\left( 1 + \sum_{N=0}^\infty \sum_{n=1}^{2k-1}
                            \frac{\zeta^{(N(2k-1)+n)^2}}{(1+\zeta)\cdots(1+\zeta^{N(2k-1)+n})} \right)\\
                        &= -\frac{1}{2}\left( 1 + \sum_{N=0}^\infty \frac{1}{2^N}\sum_{n=1}^{2k-1}
                            \frac{\zeta^{n^2}}{(1+\zeta)\cdots(1+\zeta^{n})} \right) \\
                        &= -\frac{1}{2} -\sum_{n=1}^{2k-1} \frac{\zeta^{n^2}}{(-\zeta;\zeta)_n}.
                \end{align*}
       The modularity of $B(\psi_0;q)$ is shown in Lemma \ref{5thOrderModularity}. The remaining statements in the theorem are proved in a similar fashion.
    \end{proof}

\section{Results for $\chi_0$ and $\chi_1$}\label{chi0chi1}

    To complete our study of Ramanujan's 5th order mock theta functions, we now consider
    $\chi_0$ and $\chi_1$. As previously mentioned, the bilateral series $B(\chi_0;q)$ and $B(\chi_1;q)$ are not modular forms.  In fact, $B(\chi_0;q)$ and $B(\chi_1;q)$ are not even defined on any half-plane. However, we can modify our definition of the bilateral series for these two mock theta functions to attain similar results.

    To do so, we use the following alternative forms for $\chi_0$ and $\chi_1$ from the mock theta conjectures:
        \begin{align*}
            \chi_0(q) &= 2F_0(q)-\phi_0(-q),\\
            \chi_1(q) &= 2F_1(q) + q^{-1}\phi_1(-q).
        \end{align*}
        We then define the bilateral series associated to $\chi_0$ and $\chi_1$ as follows:
        \begin{align*}
            \mathcal{B}(\chi_0;q) &:=  2B(F_0;q)-B(\phi_0;-q) = B(F_0;q), \\
            \mathcal{B}(\chi_1;q) &:= 2B(F_1;q)+q^{-1}B(\phi_1;-q) = B(F_1;q),
        \end{align*}
        where the second equality in each definition follows from \eqref{bF0phi0}.
        We can then use Lemma \ref{5thOrderModularity} to immediately show that $\mathcal{B}(\chi_0;q)$ and $\mathcal{B}(\chi_1;q)$ are modular forms of weight $1/2$ with level $\Gamma_1(10)$.  Therefore, we can use this definition of the bilateral series for $\chi_0$ and $\chi_1$ to obtain analogous radial limits for these mock theta functions.

    \begin{theorem}\label{thm:chi}
        Let $\zeta$ be a primitive root of unity, $k\in\N$, and suppose $q\to\zeta$ radially within the unit disk:
        \begin{itemize}
        \item[(a)] If $\zeta$ has order $2k-1$, then we have that 
            \begin{align*}
        	\lim_{q\to\zeta} \big( \chi_0(q) - 2\mathcal{B}(\chi_0;q)\big)
                &= -3\sum_{n=0}^{k-1} (-\zeta)^{n^2}(\zeta;\zeta^2)_n+2, \\
	        \lim_{q\to\zeta} \big( \chi_1(q) - 2\mathcal{B}(\chi_1;q)\big)
                &= 3\zeta^{-1}\sum_{n=0}^{k-1} (-\zeta)^{(n+1)^2}(\zeta;\zeta^2)_n.
            \end{align*}

        \item[(b)] If $\zeta$ has order $2k$, then we have that
            \begin{align*}
        	\lim_{q\to\zeta} \big(\chi_0(q) + \mathcal{B}(\chi_0;q)\big)
                &= 6\sum_{n=1}^{k} \frac{\zeta^{2n^2}}{(\zeta;\zeta^2)_n}+2,\\
	        \lim_{q\to\zeta} \big( \chi_1(q) + \mathcal{B}(\chi_1;q)\big)
                &= 6\sum_{n=1}^{k} \frac{\zeta^{2n(n-1)}}{(\zeta;\zeta^2)_n}.
        \end{align*}
        \end{itemize}
        Moreover, $\mathcal{B}(\chi_0;q)$ and $\mathcal{B}(\chi_1;q)$ are modular forms of weight $1/2$ with level $\Gamma_1(10)$.
    \end{theorem}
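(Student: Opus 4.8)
The plan is to reduce Theorem \ref{thm:chi} to the radial limits already established for $F_0$, $F_1$, $\phi_0$, and $\phi_1$ in Theorems \ref{thm:cleanproof} and \ref{thm:messyproof}, using the linear relations
\[
\chi_0(q) = 2F_0(q) - \phi_0(-q), \qquad \chi_1(q) = 2F_1(q) + q^{-1}\phi_1(-q)
\]
together with the identities $\mathcal{B}(\chi_0;q) = B(F_0;q) = B(\phi_0;-q)$ and $\mathcal{B}(\chi_1;q) = B(F_1;q) = -q^{-1}B(\phi_1;-q)$ from Section \ref{bsums}. The modularity claim at the end is immediate: since $\mathcal{B}(\chi_0;q) = B(F_0;q)$ and $\mathcal{B}(\chi_1;q) = B(F_1;q)$, Lemma \ref{5thOrderModularity} gives at once that both are weight $1/2$ modular forms of level $\Gamma_1(10)$, so no additional work is needed there.

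For part (a), suppose $\zeta$ has order $2k-1$. First I would write
\[
\chi_0(q) - 2\mathcal{B}(\chi_0;q) = 2\big(F_0(q) - B(F_0;q)\big) - \big(\phi_0(-q) - B(\phi_0;-q)\big),
\]
using $\mathcal{B}(\chi_0;q) = B(F_0;q)$ for the first grouping and $\mathcal{B}(\chi_0;q) = B(\phi_0;-q)$ for the second. Taking $q \to \zeta$ radially, the first bracket is handled directly by Theorem \ref{thm:cleanproof}(b), which gives $1 - \sum_{n=0}^{k-1}(-\zeta)^{n^2}(\zeta;\zeta^2)_n$. For the second bracket one applies Theorem \ref{thm:messyproof}(b): as $q \to \zeta$ with $\zeta$ of order $2k-1$, we have $-q \to -\zeta$, and I would need to track whether $-\zeta$ is a primitive root of unity of order $2k-1$ (for $\zeta$ of odd order, $-\zeta$ is a primitive root of even order $2(2k-1)$, so the $m=4k$-type case of Theorem \ref{thm:messyproof}(b) must be invoked with the appropriate parameter). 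Combining the two evaluations and simplifying the resulting constants should produce $-3\sum_{n=0}^{k-1}(-\zeta)^{n^2}(\zeta;\zeta^2)_n + 2$. The computation for $\chi_1$ is parallel, using the $q^{-1}$ twist and $\mathcal{B}(\chi_1;q) = B(F_1;q) = -q^{-1}B(\phi_1;-q)$, with the factor $\zeta^{-1}$ surviving in the limit.

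For part (b), suppose $\zeta$ has order $2k$. The relevant decomposition is
\[
\chi_0(q) + \mathcal{B}(\chi_0;q) = 2F_0(q) - \phi_0(-q) + \mathcal{B}(\chi_0;q),
\]
and again I would split $\mathcal{B}(\chi_0;q)$ appropriately between the $F_0$ and $\phi_0$ contributions so that each piece matches a radial limit from the earlier theorems. Here, since $\zeta$ has even order $2k$, one must check which branch of each earlier theorem applies: $F_0$'s limit in Theorem \ref{thm:cleanproof}(b) was stated for $\zeta$ of \emph{odd} order $2k-1$, so a separate argument (or the even-order singularity analysis of $F_0$) is needed to control the $F_0$ contribution at even-order $\zeta$, while the $\phi_0(-q)$ term is governed by the $m=4k$ case of Theorem \ref{thm:messyproof}(b) applied at $-\zeta$. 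The main obstacle I anticipate is precisely this bookkeeping: correctly matching the parity and order of the root of unity at $q=\zeta$ versus $q=-\zeta$ to the correct case of each earlier theorem, and then algebraically collapsing the truncated sums — which come with different upper limits and different $(-\zeta;\zeta^2)_n$ versus $(\zeta;\zeta^2)_n$ factors — into the single clean expressions $6\sum_{n=1}^{k}\zeta^{2n^2}/(\zeta;\zeta^2)_n + 2$ and its $\chi_1$ analogue. The absolute convergence needed to interchange limit and summation is inherited from the arguments in Section \ref{proofs}, so that step is routine; the delicate part is the consistency of indices and signs across the reductions.
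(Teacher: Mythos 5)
Your overall strategy (reduce to the earlier theorems via the linear relations and $\mathcal{B}(\chi_0;q)=B(F_0;q)=B(\phi_0;-q)$, and get modularity for free from Lemma \ref{5thOrderModularity}) is the right one, and your modularity paragraph is correct. But the execution of part (a) contains a genuine algebraic error. Your proposed decomposition, which uses $\mathcal{B}(\chi_0;q)=B(F_0;q)$ in one bracket and $\mathcal{B}(\chi_0;q)=B(\phi_0;-q)$ in the other, expands to
\begin{align*}
2\big(F_0(q)-B(F_0;q)\big)-\big(\phi_0(-q)-B(\phi_0;-q)\big)
&=\chi_0(q)-2B(F_0;q)+B(\phi_0;-q)\\
&=\chi_0(q)-\mathcal{B}(\chi_0;q),
\end{align*}
which is \emph{not} the quantity $\chi_0(q)-2\mathcal{B}(\chi_0;q)$ in the theorem. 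This is not cosmetic: the coefficient $2$ is exactly what makes the singularities cancel. Indeed, by \eqref{bF0phi0} your second bracket is identically $\phi_0(-q)-B(\phi_0;-q)=1-F_0(q)$, which \emph{diverges} as $q\to\zeta$ of odd order (this is precisely the singularity of $F_0$ that Theorem \ref{thm:cleanproof}(b) cuts out), so your two brackets cannot both have finite limits and the splitting collapses. Your fallback — invoking Theorem \ref{thm:messyproof}(b) at $-\zeta$ — also fails on its own terms: for $\zeta$ of order $2k-1$, the point $-\zeta$ has order $2(2k-1)=4k-2\equiv 2\pmod 4$, which is covered by neither the $m=2k'-1$ nor the $m=4k'$ case of that theorem.

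The idea you are missing, and the paper's actual proof, is that the linear relations make one of the two constituents cancel \emph{exactly}, so no matching of two separate radial limits is ever needed. Since $\chi_0(q)=2F_0(q)-\phi_0(-q)$ and $\mathcal{B}(\chi_0;q)=B(F_0;q)=F_0(q)+\phi_0(-q)-1$, one has the identities
\[
\chi_0(q)-2\mathcal{B}(\chi_0;q)=-3\phi_0(-q)+2=3\big(F_0(q)-B(F_0;q)\big)-1,
\qquad
\chi_1(q)-2\mathcal{B}(\chi_1;q)=3q^{-1}\phi_1(-q),
\]
so part (a) follows \emph{directly} from Theorem \ref{thm:cleanproof}(b) (note $F_0(q)-B(F_0;q)=1-\phi_0(-q)$ identically). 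For part (b) the same cancellation gives $\chi_0(q)+\mathcal{B}(\chi_0;q)=3F_0(q)-1$ and $\chi_1(q)+\mathcal{B}(\chi_1;q)=3F_1(q)$; the remaining task — the radial limit of $F_0$, $F_1$ at an \emph{even}-order root, which you correctly flag as not supplied by the earlier theorems but then leave unresolved — is handled by repeating the periodicity argument from the proof of Theorem \ref{thm:messyproof}: for $\zeta$ of order $2k$ the denominators never vanish and satisfy $(\zeta;\zeta^2)_{n+k}=2(\zeta;\zeta^2)_n$ while $\zeta^{2n^2}$ is $k$-periodic in $n$, so the series telescopes geometrically to $\lim_{q\to\zeta}F_0(q)=1+2\sum_{n=1}^{k}\zeta^{2n^2}/(\zeta;\zeta^2)_n$, producing the factor $6$ and the constant $+2$. (Be aware that the paper's own displayed identities in this proof carry typos — it writes $3F_0(-q)+2$ and $3\phi_1(-q)$ where $3F_0(q)-1$ and $3q^{-1}\phi_1(-q)$ are meant — but the cancellation method is as just described, and it is what your proposal needs to adopt.)
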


Note that the denominators indicated in part (b) of Theorem \ref{thm:chi} do not vanish since $\zeta$ has order $2k$.

\begin{proof}
	To show part (a), we note that
	\[\chi_0(q) - 2\mathcal{B}(\chi_0;q) = -3\phi_0(-q) + 2 \text{\quad and \quad } \chi_1(q) - 2\mathcal{B}(\chi_1;q) = 3\phi_1(-q).\] 
	Then the result follows directly from Theorem \ref{thm:cleanproof}.  To show part (b), we note that 
	\[\chi_0(q) + \mathcal{B}(\chi_0;q) = 3F_0(-q) + 2 \text{\quad and \quad } \chi_1(q) + \mathcal{B}(\chi_1;q) = 3F_1(-q).\] 
	Then, using a similar argument to that in the proof of Theorem \ref{thm:messyproof}, we obtain the desired result.
\end{proof}

\section{Mock theta functions of other orders}\label{otherorders}

	Having fully explored all of Ramanujan's 5th order mock theta functions using the method of bilateral series, we now turn our attention to applying this method to mock theta functions of other orders.  We discuss in this section the cases where bilateral series can be exploited to give results akin to those listed above.
	
	We start by exploring 3rd order mock theta functions.  Then we move on to those of even order.

\subsection{3rd order mock theta functions}
    The method of looking at the associated bilateral series can be applied to the following three mock theta functions:
    \begin{equation*}\label{3rdorderMock}
        \phi(q) := \sum_{n=0}^\infty \frac{q^{n^2}}{(-q^2;q^2)_n} \qquad
        \psi(q):= \sum_{n=1}^\infty \frac{q^{n^2}}{(q;q^2)_n} \qquad
        \nu(q) := \sum_{n=0}^\infty \frac{q^{n(n+1)}}{(-q;q^2)_{n+1}}.
    \end{equation*}

    We form the bilateral series as before.  Using \eqref{negative} and equation (6.1) from \cite{Fine},
    \begin{equation}\label{Fine61}
        \sum_{n\ge 0} (aq; q)_n t^n = \frac{1}{1-t}\sum_{n\ge 0} \frac{(-at)^n q^{(n^2+n)/2}}{(tq; q)_n},
    \end{equation}
    it is not hard to show that
    \begin{align*}
    	B(\phi; q) = \phi(q) + 2\psi(q) = 2B(\psi; q) \hspace{6mm} \text{ and } \hspace{6mm}
    	B(\nu; q) = \nu(q) + \nu(-q).
    \end{align*}

    We first establish the modularity of these bilateral series.

    \begin{lemma}\label{3rdOrderModular}
        $B(\phi;q)$, $B(\psi;q)$, and $B(\nu;q)$ are modular forms of weight $1/2$ and level $\Gamma_0(4)$.
    \end{lemma}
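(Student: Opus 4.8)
The plan is to mirror the strategy already used for the 5th order functions in Lemma \ref{5thOrderModularity}: reduce each bilateral series to a known modular object via the mock theta conjectures for 3rd order functions, and then read off the weight and level using the $\eta$-quotient and theta-function identities.

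First I would handle $B(\nu;q)$, since the reduction $B(\nu;q) = \nu(q) + \nu(-q)$ already isolates the even-index part of $\nu$. I expect this symmetrization to collapse to a clean $\eta$-quotient or a simple theta series. Concretely, I would apply the 3rd order mock theta identities (for instance those recorded by Watson or in the references used above) to express $\nu(q)+\nu(-q)$ as a product of the form $q^{a}(q^b;q^c)_\infty^{\pm 1}\cdots$, which can then be written in terms of $\eta(\tau)$, $\eta(2\tau)$, $\eta(4\tau)$. The well-known fact, used already in the proof of Lemma \ref{5thOrderModularity}, that $\eta(N\tau)$ is a weight $1/2$ modular form on $\Gamma_0(N)\supseteq\Gamma_1(N)$ then immediately gives weight $1/2$ and, after checking which $N$ appear, level $\Gamma_0(4)$.

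For $B(\phi;q)=\phi(q)+2\psi(q)$, I would use the classical identity expressing $\phi(q)+2\psi(q)$ in terms of a theta function and a product; the analogous 3rd order identity plays the role that the combination of Watson's $C_1,\ldots,C_4$ in \eqref{Ci} played for $f_0$. The goal is to obtain a formula of the shape $B(\phi;q) = \vartheta_4(0;q^{*})\cdot(\text{product}) + (\text{product})$, where each piece is recognizably an $\eta$-quotient of weight $1/2$ on a subgroup of $\Gamma_0(4)$. Since $\vartheta_4(0;q)=\eta(\tau)^2/\eta(2\tau)$ is already noted to be modular, the same reasoning yields weight $1/2$ on $\Gamma_0(4)$. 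The identity $B(\phi;q)=2B(\psi;q)$ then hands us the modularity of $B(\psi;q)$ for free.

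The main obstacle I anticipate is purely bookkeeping rather than conceptual: verifying that the various $q$-products that appear combine into genuine $\eta(N\tau)$ factors for $N\mid 4$ only (so the level is exactly $\Gamma_0(4)$ and no larger modulus sneaks in), and tracking the half-integral weight and associated multiplier/character so that the claimed weight $1/2$ is correct. As in the remark following the theorem statements, I would note the character can be computed explicitly using \cite{Ono}. The one place where care is genuinely required is ensuring the bilateral series actually converges to the stated $q$-product rather than a shifted or formal expression; here the reduction via \eqref{negative} and \eqref{Fine61} already guarantees that $B(\phi;q)$, $B(\psi;q)$, and $B(\nu;q)$ are honest $q$-series on $\mathbb{H}$, so the modularity statement is well posed.
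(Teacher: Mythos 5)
Your plan is sound, but it is a genuinely different route from the paper's own proof. The paper does not go through the third-order analogues of the mock theta conjectures at all: instead it reindexes the bilateral sums via $n\mapsto -n$ and \eqref{negative} to recognize them as special cases of Ramanujan's bilateral ${}_1\psi_1$ series with $\beta=0$, namely $B(\phi;q)={}_1\psi_1(-1,0;q^2;q)$ and $B(\nu;q)={}_1\psi_1(-q,0;q^2;q)$, and then applies Ramanujan's ${}_1\psi_1$ summation to evaluate each series in one step as an explicit eta-quotient,
\begin{equation*}
B(\phi;q)=\frac{q^{1/24}\eta(2\tau)^7}{\eta(\tau)^3\eta(4\tau)^3},
\qquad
B(\nu;q)=\frac{2\eta(4\tau)^3}{q^{1/3}\eta(2\tau)^2},
\end{equation*}
from which weight $1/2$ and level $\Gamma_0(4)$ are read off, with $B(\psi;q)=\tfrac12 B(\phi;q)$ handled as you say. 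Your approach instead mirrors the proof of Lemma \ref{5thOrderModularity}: it works, because the needed classical identities do exist --- Watson's relations $2\phi(-q)-f(q)=f(q)+4\psi(-q)=(q;q)_\infty(q;q^2)_\infty^2$ combine to give $\phi(-q)+2\psi(-q)=(q;q)_\infty(q;q^2)_\infty^2$, hence $\phi(q)+2\psi(q)=(q^2;q^2)_\infty(-q;q^2)_\infty^3$, and for $\nu$ one uses the relation $\nu(\pm q)=\mp q\,\omega(q^2)+(q^4;q^4)_\infty^3/(q^2;q^2)_\infty^2$, so that in $\nu(q)+\nu(-q)$ the mock pieces involving $\omega(q^2)$ cancel. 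The one point your sketch leaves implicit, and which genuinely requires care, is exactly this last step: no single recorded identity expresses $\nu(q)+\nu(-q)$ as a product; the modularity of the symmetrization rests on the cancellation of the third-order function $\omega$, which lies outside the trio $\{\phi,\psi,\nu\}$ you are treating. What the two approaches buy is clear: yours gives methodological uniformity with the fifth-order case at the cost of importing Watson's identities; the paper's ${}_1\psi_1$ argument is self-contained, evaluates the bilateral sums directly (the ${}_1\psi_1$ being itself a bilateral series, it is the natural tool), and produces the eta-quotients without any reference to mock theta relations.
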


    \begin{proof}
        We make use of Ramanujan's ${}_1\psi_1$ formula,
        $${}_1\psi_1(\alpha, \beta;q;z) := \sum_{n\in \Z} \frac{(\alpha;q)_n}{(\beta;q)_n}z^n = \frac{(\beta/\alpha;q)_\infty (\alpha z;q)_\infty (q/\alpha z ;q)_\infty (q;q)_\infty}{(q/\alpha ; q)_\infty (\beta/\alpha z ; q)_\infty (\beta;q)_\infty (z;q)_\infty}$$ for $|\beta/\alpha|<|z|<1$ (see, for example, \cite{Gasper}).  Letting $n\mapsto -n$ in $B(\phi;q)$, we see that 
        \[B(\phi;q) = \sum_{n\in\Z} \frac{q^{n^2}(-1; q^2)_n}{q^{-2n}q^{n^2+n}} = \sum_{n\in\Z}  q^n (-1;q^2)_n = {}_1\psi_1(-1, 0; q^2; q).\]
        Similarly, we have
        \[B(\nu;q) = {}_1\psi_1(-q, 0;q^2;q).\]
        Therefore, by Ramanujan's $_1\psi_1$ formula, we can realize these bilateral series as products of $q$-Pochhammer symbols. Then, using the definition of $\eta$, one can show
        \begin{equation*}
            B(\phi;q) = \frac{q^{1/24}\eta(2\tau)^7}{\eta(\tau)^3\eta(4\tau)^3} \hspace{10mm} \text{ and } \hspace{10mm} B(\nu;q) = \frac{2\eta(4\tau)^3}{q^{1/3}\eta(2\tau)^2}.
        \end{equation*}
        Written in this form and given the fact that $B(\psi;q) = \frac{1}{2}B(\phi;q)$, modularity follows with the desired weight and level easily computed.
    \end{proof}

    We now provide radial limits for these mock theta functions using their associated bilateral series.

    \begin{theorem}
        Let $\zeta$ be a primitive root of unity, $k\in\N$, and suppose $q\to\zeta$ radially within the unit disk:
        \begin{itemize}
        	\item[(a)] If $\zeta$ has order $4k$, then we have that
        	\[\lim_{q\to\zeta}\big(\phi(q) - B(\phi;q)\big) = -2\zeta \sum_{n=0}^{k-1} \zeta^n (-\zeta^2; \zeta^2)_n.\]
        	\item[(b)] If $\zeta$ has order $2k-1$, then we have that
        	\[\lim_{q\to\zeta}\big(\psi(q) - B(\psi;q)\big) = -\sum_{n=0}^{k-1} (-1)^n (\zeta; \zeta^2)_n.\]
        	\item[(c)] If $\zeta$ has order $4k-2$, then we have that
        	\[\lim_{q\to\zeta} \big(\nu(q) - B(\nu;q)\big) = -\sum_{n=0}^{k-1} \zeta^n (-\zeta; \zeta^2)_n.\]
        \end{itemize}
       	Moreover, for $M\in\{\phi,\psi,\nu\}$, $B(M;q)$ is a modular form of weight $1/2$ and level $\Gamma_0(4)$.
    \end{theorem}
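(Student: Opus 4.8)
The plan is to follow exactly the template already established for the 5th order mock theta functions in Theorems~\ref{thm:cleanproof} and~\ref{thm:messyproof}, since the modularity of $B(\phi;q)$, $B(\psi;q)$, and $B(\nu;q)$ has already been secured in Lemma~\ref{3rdOrderModular}. The essential observation is that, because each bilateral series equals a simple linear combination of the corresponding mock theta functions, the difference $M(q)-B(M;q)$ collapses to (a multiple of) a single unilateral $q$-series to which a radial limit can be taken termwise. For part (b), note that $B(\phi;q)=\phi(q)+2\psi(q)=2B(\psi;q)$ gives
\[
\psi(q)-B(\psi;q)=\psi(q)-\tfrac12 B(\phi;q)=-\tfrac12\big(\phi(q)-2\psi(q)\big)+\tfrac12\big(2\psi(q)-B(\phi;q)\big),
\]
so that $\psi(q)-B(\psi;q)$ reduces to $-\tfrac12\phi(q)$ up to the vanishing bilateral combination; more directly, one writes the difference as a convergent unilateral sum indexed by $n\ge 0$ and takes $q\to\zeta$ inside the sum.

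First I would, in each case, rewrite $M(q)-B(M;q)$ as a single unilateral series using the linear relations preceding Lemma~\ref{3rdOrderModular}. Second, I would justify interchanging $\lim_{r\to1^-}$ with the infinite sum by exhibiting a dominating series, precisely as in the proof of Theorem~\ref{thm:messyproof}: for the numerator-type sums (parts (a) and (c)) the argument mirrors the clean case of Theorem~\ref{thm:cleanproof}, whereas for the denominator-type sum in part (b) the terms $1/(\zeta;\zeta^2)_n$ require the monotonicity estimate on $|\tfrac1r+u|$ already developed there. Third, I would evaluate the resulting limit as a finite sum. In parts (a) and (c) the relevant $q$-Pochhammer factor vanishes once $\zeta$ reaches the appropriate root of unity: for part (a), since $\zeta$ has order $4k$, the factor $(-\zeta^2;\zeta^2)_n$ contains $(1+\zeta^{2k})=0$ for all $n\ge k$, truncating the sum at $n=k-1$; for part (c), the order $4k-2$ of $\zeta$ forces $(1+\zeta^{2k-1})=0$ in $(-\zeta;\zeta^2)_n$, again giving truncation at $n=k-1$. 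Part (b) instead uses, as in Theorem~\ref{thm:messyproof}, the periodicity of the summand together with the geometric factor $2^{-N}$ arising from $(-\zeta;\zeta)_{N(2k-1)+n}=2^N(-\zeta;\zeta)_n$ to collapse the doubly-indexed sum into the finite sum $\sum_{n=0}^{k-1}(-1)^n(\zeta;\zeta^2)_n$.

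The main obstacle I anticipate is the bookkeeping in part (b): one must correctly track how the period of the $q$-Pochhammer denominators interacts with the period of the numerator $q^{n^2}$ when $\zeta$ has odd order $2k-1$, and verify that the resulting series rearrangement produces exactly the geometric $2^{-N}$ factor and the stated closed form, with the $(-1)^n$ sign appearing from the $(\zeta;\zeta^2)_n$ regrouping via~\eqref{negative}. The convergence justification there is also the most delicate point, exactly as flagged in the remark following Theorem~\ref{thm:messyproof} about the denominators not causing singularities; one checks this holds precisely because the stated orders of $\zeta$ avoid the zeros of the relevant $q$-Pochhammer symbols. Parts (a) and (c) are routine once the truncation is observed. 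Finally, the modularity assertion at the end of the theorem is immediate from Lemma~\ref{3rdOrderModular}, so no further work is needed there.
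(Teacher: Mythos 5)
There is a genuine gap, and it is exactly the step that constitutes the paper's entire written proof. After the linear relations you correctly reduce to $\phi(q)-B(\phi;q)=-2\psi(q)$, $\psi(q)-B(\psi;q)=-\tfrac12\phi(q)$, and $\nu(q)-B(\nu;q)=-\nu(-q)$ (your intermediate display for the $\psi$ case is garbled algebra, though the conclusion is right). However, in their defining forms all three of these are \emph{denominator}-type series: $\psi(q)=\sum_{n\ge1}q^{n^2}/(q;q^2)_n$, $\phi(q)=\sum_{n\ge0}q^{n^2}/(-q^2;q^2)_n$, and $\nu(-q)=\sum_{n\ge0}q^{n(n+1)}/(q;q^2)_{n+1}$. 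Your truncation arguments for (a) and (c) are performed on the numerator factors $(-\zeta^2;\zeta^2)_n$ and $(-\zeta;\zeta^2)_n$, which simply do not occur in any series you have derived. They arise only after the Eulerian transformation \eqref{Fine61}, which gives $2\psi(q)=2q\sum_{n\ge0}q^n(-q^2;q^2)_n$, $\tfrac12\phi(q)=\sum_{n\ge0}(-1)^n(q;q^2)_n$, and $\nu(-q)=\sum_{n\ge0}q^n(-q;q^2)_n$; these three identities are the displayed content of the paper's proof (everything else is delegated to ``similar to Theorem \ref{thm:cleanproof}''), and your proposal never produces them, either via \eqref{Fine61} or via the equivalent device of applying \eqref{negative} directly to the negative-index tail of the bilateral series. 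Calling (a) and (c) ``numerator-type'' and ``routine'' therefore assumes precisely the missing step.

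Part (b) as you describe it would moreover fail to prove the stated formula. With \eqref{Fine61} in hand, part (b) is a \emph{clean} truncation, not a messy one: for $\zeta$ of odd order $2k-1$ the factor $1-\zeta^{2k-1}=0$ sits inside $(\zeta;\zeta^2)_n$ for every $n\ge k$, so $\sum_{n\ge0}(-1)^n(q;q^2)_n$ truncates at $n=k-1$. Running instead the periodicity/$2^{-N}$ argument of Theorem \ref{thm:messyproof} on $\phi$'s defining form, as you propose, collapses the limit to $-\tfrac12-\sum_{n=1}^{2k-1}\zeta^{n^2}/(-\zeta^2;\zeta^2)_n$, a denominator-type expression; your assertion that \eqref{negative} then ``regroups'' this into $-\sum_{n=0}^{k-1}(-1)^n(\zeta;\zeta^2)_n$ is unsupported, since \eqref{negative} only rewrites Pochhammer symbols of negative index, and proving the equality of those two finite sums is essentially the content of the identity \eqref{Fine61} that you omitted. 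Indeed, your clean/messy split is exactly backwards: before the transformation all three parts are denominator-type, and after it all three are clean, so no part of this theorem needs the $2^{-N}$ machinery.
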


    \begin{proof}
        Note that by using equation \eqref{Fine61} we have the following relations:
        \begin{align*}
        	2\psi(q) &= 2q\sum_{n\ge 0} q^n (-q^2;q^2)_n, \hspace{10mm}
        	\frac{1}{2}\phi(q) &= \sum_{n\ge0} (-1)^n (q;q^2)_n, \hspace{10mm}
        	\nu(-q) &= \sum_{n\ge0} q^n (-q; q^2)_n.
        \end{align*}
        Now with these equations, the proof of the theorem is similar to that of Theorem \ref{thm:cleanproof}.  The modularity of the bilateral series follows from Lemma \ref{3rdOrderModular}.
    \end{proof}

    The mock theta function $f(q)$ is treated in \cite{FOR1,FOR2}.  In that case, the bilateral series $B(f;q)$ is the product of a modular form and a mock modular form.  In the cases of the 3rd order mock theta functions $\omega$, $\rho$, and $\chi$, as well as for the 7th order mock theta functions, it is not immediately clear how the bilateral series could be utilized. 

\subsection{Even order mock theta functions}

    We now turn our attention to the associated bilateral series for even order mock theta functions.   Using similar methods to Section \ref{modularity}, we can use the linear relations between functions of the same order to prove that the bilateral series are modular forms for eight of the 6th order and six of the 8th order mock theta functions. (See \cite{Gordon} for a full description of these linear relations.) Through arguments similar to those in the proofs of Theorems \ref{thm:cleanproof} and \ref{thm:messyproof}, we then are able to provide the associated closed forms.

    For the sake of brevity, we summarize the relevant information for each of these cases in the tables below. We list only the essential information in order to recreate the corresponding statement associating the closed form to the radial limits.  For example, given the mock theta function $M(q)$, we list the associated bilateral series, the order of the roots of unity for which the radial limit will be taken, and the closed formula for this radial limit.  Given this information we can form the following statement:
    \begin{theorem}
         Let $\zeta$ be an appropriate root of unity with $k\in\N$, and suppose $q \rightarrow \zeta$ radially within the unit disk. We have that
        \[\lim_{q\to\zeta} \big(M(q) - B(M; q)\big) = C(M; \zeta),\]
        where $M(q)$, $C(M; \zeta)$, and $\zeta$ are as in Tables 5.1 and 5.2.  Moreover, $B(M;q)$ is a modular form of weight $1/2$.
    \end{theorem}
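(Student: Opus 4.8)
The plan is to follow the two-part strategy already used for the 5th order functions: first establish that each bilateral series $B(M;q)$ is a weight $1/2$ modular form, and then compute the radial limit by reducing $M(q)-B(M;q)$ to a negative-index sum that truncates or telescopes at the relevant root of unity. The closed form $C(M;\zeta)$ will emerge as the finite evaluation of this reduced sum at $q=\zeta$.

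For the modularity, I would proceed exactly as in the proof of Lemma \ref{5thOrderModularity}. Using \eqref{negative} to rewrite each bilateral series, together with the linear relations among even order mock theta functions of the same order recorded in \cite{Gordon}, I would express each $B(M;q)$ as a linear combination whose terms are governed by the mock theta conjectures in the 6th and 8th order cases. As in Section \ref{modularity}, these combinations collapse to products and quotients of $\eta(N\tau)$, Klein forms $\mathfrak{t}^{(N)}_{(r,s)}(N\tau)$, and the Rogers-Ramanujan functions $G$ and $H$, whose $\eta$/Klein-form presentation is Lemma \ref{RRquotients}; in some cases Ramanujan's ${}_1\psi_1$ summation, applied as in Lemma \ref{3rdOrderModular}, directly produces an infinite product. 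The weight $1/2$ and the level then follow from the known modularity of $\eta(N\tau)$ on $\Gamma_0(N)$ and the transformation law of the Klein forms, with the precise level read off case by case.

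For the radial limits, I would mimic the proofs of Theorems \ref{thm:cleanproof} and \ref{thm:messyproof}. In each case the relevant linear relation, combined with \eqref{negative}, writes $M(q)-B(M;q)$ as a constant multiple of a single unilateral sum of mock theta type, namely the contribution of the negative-index terms of the bilateral series. After passing to the radial variable $q=r\zeta$ and justifying the interchange of limit and summation by absolute convergence for $0<r\le 1$ (via the monotonicity and dominating-series argument used for $\psi_0$), I would evaluate at $r=1$. For the functions behaving like Theorem \ref{thm:cleanproof}, the $q$-Pochhammer factors vanish once the index exceeds the order of $\zeta$, so the sum truncates to the finite closed form $C(M;\zeta)$. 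For the functions behaving like Theorem \ref{thm:messyproof}, the summand is periodic in the index up to a fixed geometric factor, so the tail sums as a geometric series and again yields the stated finite expression.

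The main obstacle is the case analysis rather than any single hard step. There is no uniform identity covering all fourteen functions: each requires the correct linear relation from \cite{Gordon}, the correct $\eta$-product identity for modularity, and the correct truncation-versus-telescoping behaviour at its prescribed order of $\zeta$. The most delicate point is the convergence justification in the messy cases, where the summand does not vanish but only becomes periodic, so that constructing the dominating series that validates the interchange of limit and sum—uniformly along the radial approach—is where the genuine work lies. The modularity computations, by contrast, are routine once the right linear relation has been selected.
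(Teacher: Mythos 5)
Your proposal follows essentially the same two-part strategy as the paper: modularity via the linear relations of Gordon--McIntosh, which express each $B(M;q)$ as a sum of infinite products recognizable as weight $1/2$ modular forms, and radial limits via the truncation or geometric-series arguments of Theorems \ref{thm:cleanproof} and \ref{thm:messyproof}. One small correction: for the 6th and 8th order functions the paper does not invoke the mock theta conjectures, Klein forms, or the Rogers--Ramanujan functions (those are specific to the 5th order case); the relations (5.8) of \cite{Gordon} already deliver the needed $\eta$-product expressions directly.
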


\noindent \emph{Remark.} The level and character of $B(M;q)$ can be explicitly calculated using methods as in the proofs of Theorems \ref{thm:cleanproof} and \ref{thm:messyproof}.

Note that Table 5.1 is specifically for mock theta functions of order 6, despite the reuse of notation with mock theta functions of other orders.  For full definitions of these $q$-series, see Section 5 of \cite{Gordon}.  Further, as noted in the remarks after Theorems \ref{thm:messyproof}, the denominators in the closed formulas $C(M;\zeta)$ in the tables never vanish under the hypotheses on the given root of unity $\zeta$.

By writing the bilateral series as linear combinations of mock theta functions, we can then use the linear relations in \cite{Gordon} to show that these series are in fact modular.  For example, for the 6th order mock theta function $\lambda(q)$, we have that $B(\lambda;q) = \lambda(q) + 2\rho(q)$.  Then, from relations (5.8) in \cite{Gordon}, we see that
\[B(\lambda;q) = (q;q^2)_\infty^2 (q;q^6)_\infty (q^5;q^6)_\infty (q^6;q^6)_\infty + 2(-q;q^2)_\infty^2 (-q;q^6)_\infty (-q^5;q^6)_\infty (q^6;q^6)_\infty.\]
Then, using methods similar to those in Section \ref{modularity} (or more generally in \cite{Ono} for example), we can easily recognize this sum of infinite products as a modular form of weight $1/2$.  The proof of all remaining $B(M;q)$ in the tables is similar.

    \begin{center}
    TABLE 5.1: Mock theta functions of order 6\vspace{3mm}
    
        \begin{tabular}{|c|c|c|c|}
        	\hline & & & \\ [-2ex]
        	$M(q)$ & $B(M;q)$ & Order of $\zeta$ & $C(M; \zeta)$ \\ [.5ex] \hline & & & \\ [-1.5ex]
        	    $\lambda$ & $\displaystyle \lambda(q) + 2\rho(q)$ & $2k$ & $\displaystyle -2 \sum_{n=0}^{k-1} \frac{\zeta^{\frac{1}{2} n(n+1)} (-\zeta;\zeta)_n}{(\zeta;\zeta^2)_{n+1}}$\\ [0.2in] \hline & & & \\ [-1.5ex]
            	$\mu$ & $\displaystyle \mu(q) + 2\sigma(q)$ & $2k$ & $\displaystyle -2 \sum_{n=0}^{k-1} \frac{\zeta^{\frac{1}{2}(n+1)(n+2)}(-\zeta;\zeta)_n}{(\zeta;\zeta^2)_{n+1}}$\\ [0.2in] \hline & & & \\ [-1.5ex]$\phi$ & $\displaystyle \phi(q) + 2\nu(q)$ & $2k$ & $\displaystyle -2\sum_{n=0}^{k-1}
            \frac{\zeta^{n+1}(-\zeta;\zeta)_{2n+1}}{(\zeta;\zeta^2)_{n+1}}$\\ [0.2in] \hline & & & \\ [-1.5ex]
            	$\psi$ & $\displaystyle \psi(q) + 2\xi(q)$ & $2k$ & $\displaystyle -2 \sum_{n=0}^{k-1} \frac{\zeta^{n+1} (-\zeta;\zeta)_{2n}}{(\zeta;\zeta^2)_{n+1}}$\\ [0.2in] \hline & & & \\ [-1.5ex]
                $\rho$ & $\displaystyle \rho(q) + \frac{1}{2}\lambda(q)$ & $2k-1$ & $\displaystyle -\frac{1}{2} \sum_{n=0}^{k-1} \frac{(-1)^n \zeta^n (\zeta;\zeta^2)_n}{(-\zeta;\zeta)_n}$\\ [0.2in] \hline & & & \\ [-1.5ex]
            	$\sigma$ & $\displaystyle \sigma(q) + \frac{1}{2}\mu(q)$ & $2k-1$ & $\displaystyle -\frac{1}{2} \sum_{n=0}^{k-1} \frac{(-1)^n (\zeta;\zeta^2)_n}{(-\zeta;\zeta)_n}$\\ [0.2in] \hline & & & \\ [-1.5ex]
            $\nu$ & $\displaystyle \nu(q) + \frac{1}{2}\phi(q)$ & $2k-1$ & $\displaystyle -\frac{1}{2} \sum_{n=0}^{k-1} \frac{(-1)^n \zeta^{n^2} (\zeta;\zeta^2)_n}{(-\zeta;\zeta)_{2n}}$\\ [0.2in] \hline & & & \\ [-1.5ex]
            $\xi$ & $\displaystyle \xi(q) + \frac{1}{2}\psi(q)$ & $2k-1$ & $\displaystyle -\frac{1}{2} \sum_{n=0}^{k-1} \frac{(-1)^n \zeta^{(n+1)^2} (\zeta;\zeta^2)_n}{(-\zeta;\zeta)_{2n+1}}$\\ [0.2in] \hline
        \end{tabular}
    \end{center}

	\newpage
    \begin{center}
    TABLE 5.2: Mock theta functions of order 8\vspace{3mm}
        \begin{tabular}{|c|c|c|c|}
    	\hline & & & \\ [-2ex]
    	$M(q)$ & $B(M;q)$ & Order of $\zeta$ & $C(M; \zeta)$ \\ [.5ex] \hline & & & \\ [-1.5ex]
    	$S_0$ & $\displaystyle S_0(q) + 2T_0(q)$ & $4k$ & $\displaystyle -2 \sum_{n=0}^{k-1} \frac{\zeta^{(n+1)(n+2)} (-\zeta^{2};\zeta^{2})_{n}}{(-\zeta;\zeta^{2})_{n+1}}$\\ [0.2in] \hline & & & \\ [-1.5ex]
    	$S_1$ & $\displaystyle S_1(q) + 2T_1(q)$ & $4k$ & $\displaystyle -2 \sum_{n=0}^{k-1} \frac{\zeta^{n(n+1)} (-\zeta^{2};\zeta^{2})_{n}}{(-\zeta;\zeta^{2})_{n+1}}$\\ [0.2in] \hline & & & \\ [-1.5ex]
    	$T_0$ & $\displaystyle T_0(q) + \frac{1}{2}S_0(q)$ & $4k-2$ & $\displaystyle -\frac{1}{2} \sum_{n=0}^{k-1} \frac{\zeta^{n^{2}} (-\zeta;\zeta^{2})_{n}}{(-\zeta^{2};\zeta^{2})_{n}}$\\ [0.2in] \hline & & & \\ [-1.5ex]
    	$T_1$ & $\displaystyle T_1(q) + \frac{1}{2}S_1(q)$ & $4k-2$ & $\displaystyle -\frac{1}{2} \sum_{n=0}^{k-1} \frac{\zeta^{n(n+2)} (-\zeta;\zeta^{2})_{n}}{(-\zeta^{2};\zeta^{2})_{n}}$\\ [0.2in] \hline  & & & \\ [-1.5ex]
    	$V_0$ & $\displaystyle V_0(q) + V_0(-q) + 1$ & $2k-1$ & $\displaystyle -2 \sum_{n=0}^{k-1} \frac{-\zeta^{n^{2}} (\zeta;\zeta^{2})_{n}}{(-\zeta;\zeta^{2})_{n}}$\\ [0.2in] \hline & & & \\ [-1.5ex]
    	$V_1$ & $\displaystyle V_1(q) - V_1(-q)$ & $2k-1$ & $\displaystyle  \sum_{n=0}^{k-1} \frac{-\zeta^{(n+1)^{2}} (\zeta;\zeta^{2})_{n}}{(-\zeta;\zeta^{2})_{n+1}}$\\ [0.2in] \hline
        \end{tabular}
    \end{center}
\vspace{5mm}

This appears to be an exhaustive list of the mock theta functions for which this method of using bilateral series can be applied to obtain closed formulas for the radial limits.  For those mock theta functions listed in \cite{Gordon} but not included here, the method seems to fail because the bilateral series $B(M;q)$ associated to each of these mock theta functions do not appear to be modular forms. In a small number of cases, we can apply the method of Section \ref{chi0chi1} to obtain a modular form.  For example, for the 8th order mock theta function $U_0$, we can define $\mathcal{B}(U_0;q)$ using the equation $U_0 = S_0(q^2) + qS_1(q^2)$.  However, despite the modularity of these bilateral series, they do not immediately reveal closed formulas for the radial limits.  Therefore, the authors believe an alternate method is needed for the mock theta functions not addressed here.

\bibliographystyle{amsplain}

\begin{thebibliography}{10}

\bibitem{AG}
G. E. Andrews and F. Garvan,
\textit{Ramanujan's ``lost" notebook. VI},
The mock theta conjectures, Adv. in Math. \textbf{73} (1989), no. 2, 242--255.

\bibitem{Fine}
N. J., Fine,
\textit{Basic hypergeometric series and applications},
Math. Surveys and Monographs, Amer. Math. Soc., Providence. (1988), no. 27,

\bibitem{FOR1}
A. Folsom, K. Ono, and R. Rhoades,
\textit{$q$-series and quantum modular forms},
submitted for publication.

\bibitem{FOR2}
A. Folsom, K. Ono, and R. Rhoades,
\textit{Ramanujan's radial limits},
submitted for publication.

\bibitem{Gasper}
G. Gasper and M. Rahman, \emph{Basic hypergeometric series}, Ency. Math. and App. \textbf{35}, Cambridge Univ.
Press, Cambridge, 1990.

\bibitem{Gordon}
B. Gordon and R. McIntosh,
\textit{A survey of the classical mock theta functions, Partitions, $q$-series, and modular forms},
Dev. Math. \textbf{23}, Springer-Verlag, New York, 2012, 95--244.

\bibitem{GOR}
M. Griffin, K. Ono, and L. Rolen
\emph{Ramanujan's mock theta functions}, Proceedings of the National Academy of Sciences, USA, \textbf{110}, No. 15 (2013), pages 5765-5768.

\bibitem{Hickerson}
D. Hickerson,
\textit{A proof of the mock theta conjectures},
Invent. Math. \textbf{94} (1988), no. 3, 639--660.

\bibitem{Kubert}
D. Kubert and  S Lang,
\textit{Modular units},
Springer-Verlag-\textbf{244}, 1981.

\bibitem{Ono}
K. Ono, \emph{The web of modularity: arithmetic of the coefficients of modular forms and q-series}, CBMS
Regional Conference Series in Mathematics, \textbf{102}, Amer. Math. Soc., Providence, RI, 2004.

\bibitem{Watson}
G. N. Watson,
\textit{The mock theta functions},
Proc. London Math. Soc. \textbf{2} (2) (1937), 274--304.

\bibitem{Zagier}
D. Zagier,
\emph{Ramanujan's mock theta functions and their applications (after Zwegers and Ono-Bringmann),} 
S\'eminaire Bourbaki Vol. 2007/2008, 
Ast\'erisque {\textbf{326}} (2009), Exp. No. 986, vii-viii, 143-164 (2010). 

\bibitem{Zwegers01}
S. Zwegers,
\textit{Mock $\vartheta$-functions and real analytic modular forms, q-series with applications to combinatorics, number theory, and physics},
(Ed. B. C. Berndt and K. Ono), Contemp. Math. \textbf{291}, Amer. Math. Soc., (2001), 269-277.

\bibitem{Zwegers02}
S. Zwegers,
\textit{Mock theta functions},
Ph.D. Thesis (Advisor: D. Zagier), Universiteit Utrecht, (2002).



\end{thebibliography}

\end{document}